\newtheorem{theorem}{Theorem}
\newtheorem{proposition}[theorem]{Proposition}
\newtheorem{lemma}[theorem]{Lemma}
\newtheorem{corollary}[theorem]{Corollary}
\theoremstyle{remark}
\numberwithin{equation}{section}
\begin{document}

\title[Elliptic Racah polynomials]
{Elliptic Racah polynomials}

\author{Jan Felipe  van Diejen}

\address{
Instituto de Matem\'aticas, Universidad de Talca,
Casilla 747, Talca, Chile}

\email{diejen@inst-mat.utalca.cl}

\author{Tam\'as G\"orbe}

\address{School of Mathematics, University of Leeds, Leeds LS2 9JT,
UK}

\email{T.Gorbe@leeds.ac.uk}

\subjclass[2010]{Primary: 42C05; Secondary: 33C47, 33E10, 47B36.}
\keywords{Racah polynomials, difference Heun equation, tridiagonal matrix, diagonalization.}

\date{March 2021}

\begin{abstract} 
Upon solving a finite discrete reduction of the difference Heun equation, we arrive at an elliptic generalization of the Racah polynomials.
We exhibit the three-term recurrence relation and the orthogonality relations for these elliptic Racah polynomials.
The well-known $q$-Racah polynomials of  Askey and Wilson are recovered as a trigonometric limit.
\end{abstract}

\maketitle

\section{Introduction}\label{sec1}
The  Askey-Wilson polynomials
\cite{ask-wil:some} constitute
a master family from which all other members listed in Askey's celebrated scheme of (basic) hypergeometric orthogonal polynomials
 can be recovered via parameter specializations and limit transitions \cite{koe-les-swa:hypergeometric}. In particular,  for parameters subject to a suitable truncation condition the
Askey-Wilson polynomials reduce to $q$-Racah polynomials
\cite{ask-wil:set},  a  finite-dimensional discrete orthogonal family that is known to express the
$6j$ symbols associated with the  $SL_q(2)$ quantum group \cite{kir-res:representations}.  In the limit $q\to 1$, this reproduces a previously observed interpretation of the classical $6j$ symbols for the Lie group $SL(2)$ in terms of a hypergeometric orthogonal family known as Racah polynomials, which arises similarly  as a finite discrete truncation of Wilson's master family of hypergeometric orthogonal polynomials.

A remarkable elliptic hypergeometric generalization of the $6j$ symbols originating from the Yang-Baxter equation for exactly solvable lattice models
\cite{dat-jim-miw-oka:fusion,dat-jim-kun-miw-oka:exactly} has been identified and studied by Frenkel and Turaev \cite{fre-tur:elliptic}. It was pointed out by Spiridonov and Zhedanov \cite{spi-zhe:spectral,spi-zhe:generalized}  that rather than expressing orthogonal polynomials, these elliptic $6j$ symbols constitute in fact
an elliptic hypergeometric counterpart of biorthogonal rational functions that had been found previously at the basic hypergeometric level by Wilson
as a (non-polynomial) generalization of the $q$-Racah polynomials \cite{wil:orthogonal}.   From the point of view of representation theory, the elliptic hypergeometric biorthogonal rational functions in question can be seen, respectively, as $6j$ symbols for the elliptic quantum group associated with $U(2)$ \cite{koe-nor-ros:elliptic}
or as   $6j$ symbols  for the Sklyanin algebra \cite{ros:elementary,skl:some}.
A corresponding extension of the Askey scheme to the case of (basic) hypergeometric
biorthogonal rational functions has been worked out in \cite{bul-rai:basic,bul-rai:limits}.

The hallmark duality symmetry \cite{ask-wil:set,leo:orthogonal} between the orthogonality relations and the dual orthogonality relations for the ($q$-)Racah polynomials and the corresponding $6j$ symbols is known to persist at the level of the elliptic hypergeometric biorthogonal rational functions and the elliptic $6j$ symbols
\cite{spi-zhe:spectral} (cf. also \cite{koe-nor-ros:elliptic}).
The purpose of the present note, however, is to point out an elliptic generalization of the ($q$-)Racah orthogonal polynomials that avoids the transition to biorthogonal rational functions, at the expense of sacrificing this
manifest duality symmetry.
To this end we start from
a difference Heun equation that is obtained from the eigenvalue problem for a quantum
 Ruijsenaars-Schneider type particle Hamiltonian introduced in \cite{die:integrability} (cf. also \cite{kom-hik:quantum,kom-hik:conserved} for a proof of the integrability), upon specializing to the case of just a single particle. 
Systematic studies of the solutions of this difference Heun equation were performed in \cite{cha:bethe}  for integral values of the (coupling) parameters
and  in  \cite{rui:hilbert4} for parameters pertaining to a  much larger domain of orthogonality. 
Particular solutions for special parameter instances of the difference Heun equation can be found in
 \cite{tre:difference} (within the framework of the finite-gap integration of soliton equations) and in \cite{spi:elliptic,spi:continuous} (through elliptic hypergeometry).
Moreover, the difference Heun equation arises in the context of the representation theory of the Sklyanin algebra
\cite{ros:sklyanin,spi:continuous,rai-rui:difference}, as a linear problem associated with the elliptic Painlev\'e VI equation 
\cite{nou-rui-yam:elliptic}, and  it turns out to describe the introduction of
surface defects to the index computation of certain four dimensional compactifications of the six dimensional E string theory on a Riemann surface
\cite{naz-raz:surface}.

The difference Heun equation admits a rich hierarchy
of degenerations generalizing the Askey scheme (cf. \cite{die:difference}), the solutions of which are currently under active investigation
\cite{bas-tsu-vin-zhe:heun-askey,bas-vin-zhe:q-heun,ber-gab-vin-zhe:sklyanin,tak:degenerations,tak:q-deformations,tsu-vin-zhe:rational}. In this same spirit,
we will introduce below a finite-dimensional reduction of the difference Heun equation that is obtained by means of a truncation procedure that should be viewed as an elliptic counterpart  of the truncation yielding the $q$-Racah polynomials from the Askey-Wilson polynomials.
We thus end up with a finite discrete Heun equation describing the eigenvalue problem for a finite-dimensional tridiagonal matrix with explicit entries given by theta functions. By means of standard techniques from the theory of tridiagonal matrices, we will solve the corresponding spectral problem in terms of an orthogonal system of discrete Heun functions given by an elliptic generalization of the ($q$-)Racah polynomials. This elliptic Racah polynomial is defined by a tridiagonal determinant giving rise to a three-term recurrence relation and explicit orthogonality relations determined via the Christoffel-Darboux formula. In the trigonometric limit one recovers the $q$-Racah polynomial of Askey and Wilson.

Let us now outline the precise layout of this note.
In Section \ref{sec2} we recall the definition of the  difference Heun equation;  by implementing a truncation condition on the parameters
the finite discrete Heun equation is introduced.
This finite discrete Heun equation encodes the eigenvalue problem for a finite-dimensional tridiagonal matrix with simple spectrum. In
Section \ref{sec3} we construct the corresponding eigenvectors, which entails the elliptic Racah polynomials and their orthogonality relations.
By expanding the defining tridiagonal determinant an explicit formula for the elliptic Racah polynomials is obtained.
In Section \ref{sec4} it is verified that in the trigonometric limit, the elliptic Racah polynomials recuperate the $q$-Racah polynomials together with their recurrence relation and orthogonality relations. We also point out a  Lam\'e type  parameter reduction of the elliptic Racah polynomials that diagonalizes a recently found elliptic generalization of the Kac-Sylvester matrix \cite{die-gor:elliptic}. This is
a discrete elliptic counterpart of a well-known parameter reduction retrieving Rogers' $q$-ultraspherical  polynomials from the Askey-Wilson polynomials \cite{koe-les-swa:hypergeometric}.

\section{Finite discrete Heun equation}\label{sec2}

\subsection{Difference Heun equation}
The difference Heun equation  is an eigenvalue equation for a complex function $f(z)$:
\begin{subequations}
\begin{equation}\label{dH:eq}
\mathrm{H} f =\textsc{e} f ,
\end{equation}
which is determined by 
a linear second-order difference operator of the form
\begin{equation}\label{dH:op}
(\mathrm{H}f)(z)= \textsc{a}(z) f(z+1)+\textsc{a}(-z)f(z-1)+\textsc{b}(z)f(z) 
\end{equation}
and a  spectral parameter $\textsc{e}\in\mathbb{C}$; notice that we have
scaled the independent variable $z$ such that the steps of the difference equation take unit values. 
The coefficients $\textsc{a} (z)$ and $\textsc{b} (z)$ denote
meromorphic functions that are given explicitly by
\begin{equation}\label{coefA}
\textsc{a} (z)=\prod_{1\leq r\leq 4}{\textstyle \frac{[z+u_r]_r}{[z]_r}\frac{[z+\frac{1}{2}+v_r]_r}{[z+\frac{1}{2}]_r}}
\quad\text{and}\quad
\textsc{b} (z)=\sum_{1\leq r\leq 4} {\textstyle c_r\frac{[z+\frac{1}{2}+u]_r}{[z+\frac{1}{2}]_r}\frac{[z-\frac{1}{2}-u]_r}{[z-\frac{1}{2}]_r} } ,
\end{equation}
with
\begin{equation}\label{cr}
c_r={\textstyle \frac{2}{[u]_1[u+1]_1}  } \prod_{1\leq s\leq 4} {\textstyle [ u_{\pi_r(s)}-\frac{1}{2}]_s[v_{\pi_r(s)}]_s } .
\end{equation}
\end{subequations}
In these formulas $\pi_1,\ldots ,\pi_4$ stand for permutations that act on (the indices of) the parameters representing translations over the half-periods of the elliptic functions: $\pi_1=\mathrm{id}$, $\pi_2=(12)(34)$, $\pi_3=(13)(24)$,  $\pi_4=(14)(23)$.
Moreover, we have employed the following rescaled and normalized variants
\begin{equation}\label{scaled-thetafunctions}
[z]_1=\dfrac{\theta_1(\frac{\alpha}{2}z)}{\frac{\alpha}{2}\theta'_1(0)},\quad
[z]_2=\dfrac{\theta_2(\frac{\alpha}{2}z)}{\theta_2(0)},\quad
[z]_3=\dfrac{\theta_3(\frac{\alpha}{2}z)}{\theta_3(0)},\quad
[z]_4=\dfrac{\theta_4(\frac{\alpha}{2}z)}{\theta_4(0)},
\end{equation}
of the  Jacobi theta functions
\begin{equation*}
\begin{aligned}
\theta_1(z)=\theta_1(z;p)&=2\sum_{n=0}^\infty(-1)^np^{(n+\frac{1}{2})^2}\sin(2n+1)z\\
&=2p^{1/4}\sin(z)\prod_{n=1}^\infty(1-p^{2n})(1-2p^{2n}\cos(2z)+p^{4n}) ,\\
\theta_2(z)=\theta_2(z;p)&=2\sum_{n=0}^\infty p^{(n+\frac{1}{2} )^2}\cos(2n+1)z\\
&=2p^{1/4}\cos(z)\prod_{n=1}^\infty(1-p^{2n})(1+2p^{2n}\cos(2z)+p^{4n}) ,\\
\theta_3(z)=\theta_3(z;p)&=1+2\sum_{n=1}^\infty p^{n^2}\cos(2nz)\\
&=\prod_{n=1}^\infty(1-p^{2n})(1+2p^{2n-1}\cos(2z)+p^{4n-2}), \\
\theta_4(z)=\theta_4(z;p)&=1+2\sum_{n=1}^\infty(-1)^np^{n^2}\cos(2nz)\\
&=\prod_{n=1}^\infty(1-p^{2n})(1-2p^{2n-1}\cos(2z)+p^{4n-2}),
\end{aligned}
\end{equation*}
where $0<p<1$ stands for the elliptic nome and the scaling parameter $\alpha>0$ regulates the real period $\frac{2\pi}{\alpha}$  of the coefficients of $\mathrm{H}$.  The difference Heun equation
depends on eight coupling parameters $u_1,\ldots, u_4$, $v_1,\ldots ,v_4$ and a virtual regularization parameter $u$; this last parameter merely shifts the spectrum of
$\mathrm{H}$ (because the elliptic function $\textsc{b}(z)$ has only simple poles with positions and residues that do not depend on $u$).

The difference Heun operator $\mathrm{H}$ \eqref{dH:op}--\eqref{cr} goes back to a difference operator introduced in \cite[Eqs. (4.1)--(4.3)]{die:integrability} (upon specialization to the case $n=1$).
Notice that for a precise comparison between the above formulas and those in \cite{die:integrability} it is needed to pass from Jacobi theta functions to Weierstrass sigma functions associated with the period  lattice $\Omega=2\omega_1\mathbb{Z}+2\omega_2\mathbb{Z}$ (cf. e.g. \cite[Chapter 6]{law:elliptic}):
\begin{equation}\label{jw-conversion}
[z]_{r}= \sigma_{r-1} (z) e^{-\tfrac{\alpha\eta_1}{2\pi}z^2},\quad r=1,\ldots ,4, 
\end{equation}
where $\omega_1=\frac{\pi}{\alpha}$, $p=e^{i\pi \tau}$ with $\tau=\omega_3/\omega_1$, $\omega_3=-\omega_1-\omega_2$ and
\begin{equation*}
\sigma_0(z)=\sigma(z),\quad \sigma_s(z)=e^{-\eta_sz}\dfrac{\sigma(z+\omega_s)}{\sigma(\omega_s)} \quad\text{with}\   \eta_s=\zeta(\omega_s)\quad s=1,2,3.
\end{equation*}
Here $\sigma (z)$ and
$\zeta(z)=\sigma^\prime(z)/\sigma(z)$ stand for the Weierstrass sigma and zeta functions, respectively.  Indeed, it is readily seen by means
of the relation in Eq. \eqref{jw-conversion} that---upon conjugation with a Gaussian and multiplication by an overall constant---our difference Heun operator
$\mathrm{H}$ can be converted into a difference operator $\hat{\mathrm{H}}$ of the same form as in Eqs. \eqref{dH:op}--\eqref{cr} but with all rescaled theta functions $[\cdot]_r$ being replaced by sigma functions $\sigma_{r-1}(\cdot )$ ($r=1,\ldots ,4$): 
\begin{equation*}
\hat{\mathrm{H}}=  e^{-a+b} e^{-az^2}  \mathrm{H}e^{a z^2} ,
\end{equation*}
where $a=\frac{\alpha\eta_1}{2\pi} \sum_{1\leq r\leq 4}( u_r+v_r)$ and  $b=  \frac{\alpha\eta_1}{2\pi} \sum_{1\leq r\leq 4}( u_r^2+v_r^2 +v_r)$, i.e.
\begin{equation*}
    \mathrm{H}\to\hat{\mathrm{H}} \Longleftrightarrow [z]_r \to \sigma_{r-1}(z).
\end{equation*}
The  gauged and normalized difference Heun operator  $\hat{\mathrm{H}}$ thus obtained coincides therefore with the difference operator in  \cite[Eqs. (4.1)--(4.3)]{die:integrability}
(with $n=1$, $\beta\hbar=1/i$, $\gamma=1/2$,
$\mu=-u$, and $\mu_{r-1}=u_r$, $\mu_{r-1}^\prime=v_r$ for $r=1,\ldots ,4$).  

For the case of nonnegative integral values of the coupling parameters $u_1,\ldots,u_4$ and $v_1,\ldots,v_4$ eigenfunctions
for $\hat{\mathrm{H}}$ were computed in \cite{cha:bethe}. A more general construction of the difference Heun eigenfunctions
covering a much larger domain of parameter values can be found in \cite{rui:hilbert4}.

\subsection{Finite-dimensional reduction}
From now on we will pick real-valued coupling parameters $u_1,\ldots u_4$, $v_1,\ldots v_4$  from the domain
\begin{subequations}
\begin{equation}\label{pd:a}
\boxed{u_r  >0 ,\   |v_r| < u_r +{\textstyle \frac{1}{2}}    \ (r=1,2)\quad\text{and}\quad  u_r,v_r\in\mathbb{R}\ (r=3,4) ,}
\end{equation}
while throughout it will be assumed that the virtual parameter $u$ is chosen in $\mathbb{R}$ such that $u,u+1\neq 0\mod \frac{2\pi}{\alpha}\mathbb{Z}$. 
To truncate the difference Heun equation we adjust the real period $\frac{2\pi}{\alpha}$
in terms of the coupling parameters in the following way
\begin{equation}\label{pd:b}
\boxed{\alpha=\frac{\pi}{u_1+u_2+\textsc{m}}\quad\text{with}\ \textsc{m}\in\mathbb{N}}
\end{equation}
\end{subequations}
(so $u_1+u_2+\textsc{m}=\frac{\pi}{\alpha}$).
Indeed, the conditions on the parameters ensure that 
 the ($\textsc{m}+1$)-dimensional space of functions $f:\Lambda_{\textsc{m}}\to\mathbb{C}$ over the shifted
 finite integer lattice
\begin{equation*}
\Lambda_{\textsc{m}}=\{u_1,u_1+1,u_1+2,\ldots ,u_1+ \textsc{m} \} 
\end{equation*}
is stable for the action of the difference operator $\mathrm{H}$ \eqref{dH:op}--\eqref{cr}, because
$\textsc{a}(-u_1)=\textsc{a}(u_1+\textsc{m})=0$ in view of the zeros
of $[z]_1$ and $[z]_2$ at $z=0$ and $z=\frac{\pi}{\alpha}$, respectively.
This gives rise to the following finite-dimensional reduction of the difference Heun equation:
\begin{subequations}
\begin{equation}\label{dh:a}
\tilde{a}_{\textsc{m}-k} f_{k+1}+ a_k f_{k-1}+ b_k f_k=  \textsc{e} f_k
\end{equation}
for $ k=0,1,\ldots ,\textsc{m}$, where  $f_k=f(u_1+k)$ and
\begin{equation}\label{dh:b}
\tilde{a}_{k} =\textsc{a}(u_1+\textsc{m}-k),\quad a_k=\textsc{a}(-u_1-k),\quad b_k =\textsc{b}(u_1+k)
\end{equation}
\end{subequations}
(so $\tilde{a}_0=a_0=0$).

It is helpful to write out the coefficients in question explicitly:
\begin{subequations}
\begin{equation}\label{ak}
a_k = \prod_{1\leq r\leq 4}{\textstyle \frac{[u_1-u_r+k]_r}{[u_1+k]_r}\frac{[u_1-v_r-\frac{1}{2}+k ]_r}{[u_1-\frac{1}{2}+k]_r}}
\end{equation}
(because $[-z]_1=-[z]_1$ and $[-z]_r=[z]_r$ if $r\neq 1$),
\begin{equation}\label{tak}
\tilde{a}_k = \prod_{1\leq r\leq 4}{\textstyle \frac{[u_2-u_{\pi_2(r)}+k]_r}{[u_2+k]_r}\frac{[u_2-v_{\pi_2(r)}-\frac{1}{2}+k ]_r}{[u_2-\frac{1}{2}+k]_r}}
=\pi_2(a_k)
\end{equation}
(because $[z+\frac{\pi}{\alpha}]_r=[-z]_{\pi_2(r)}$),
and similarly
\begin{equation}
b_k = \sum_{1\leq r\leq 4} {\textstyle c_r\frac{[u_1+k+\frac{1}{2}+u]_r}{[z+\frac{1}{2}]_r}\frac{[u_1+k-\frac{1}{2}-u]_r}{[z-\frac{1}{2}]_r} } ,
\end{equation}
so
\begin{equation}\label{bk}
b_{\textsc{m}-k}= \sum_{1\leq r\leq 4} {\textstyle c_{\pi_2(r)}\frac{[u_2+k+\frac{1}{2}+u]_r}{[z+\frac{1}{2}]_r}\frac{[u_2+k-\frac{1}{2}-u]_r}{[z-\frac{1}{2}]_r} } =\pi_2(b_k)
\end{equation}
\end{subequations}
(because $\pi_r\circ \pi_s=\pi_{\pi_r(s)}$), where $\pi_r$ is understood to act on the coefficients $a_k$ and $b_k$ by permuting the parameters: $\pi_r(u_s)=u_{\pi_r(s)}$ and
$\pi_r(v_s)=v_{\pi_r(s)}$.  With the aid of these formulas one readily checks the positivity of the off-diagonal coefficients 
in the finite discrete Heun equation.

\begin{lemma}[Positivity]\label{positivity:lem}
The off-diagonal coefficients $a_1,\dots,a_{\textsc{m}}$ and $\tilde{a}_1,\dots,\tilde{a}_{\textsc{m}}$ of the finite discrete Heun equation
\eqref{dh:a}, \eqref{dh:b} are all positive.
\end{lemma}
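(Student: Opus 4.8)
The plan is to reduce everything to elementary sign properties of the rescaled theta functions on the real axis. From the product representations recorded after \eqref{scaled-thetafunctions} one sees that, for $0<p<1$ and real argument $x$, each infinite product $\prod_{n\ge1}(1-p^{2n})(1\mp2p^{2n}\cos2x+p^{4n})$ and $\prod_{n\ge1}(1-p^{2n})(1\mp2p^{2n-1}\cos2x+p^{4n-2})$ is strictly positive, since every factor is bounded below by $(1-p^{j})^{2}>0$ for a suitable $j\ge1$. Together with the positivity of the normalizing constants $\tfrac{\alpha}{2}\theta_1'(0)$, $\theta_2(0)$, $\theta_3(0)$, $\theta_4(0)$, this yields the sign rules $\operatorname{sign}[z]_1=\operatorname{sign}\sin(\tfrac{\alpha}{2}z)$ and $\operatorname{sign}[z]_2=\operatorname{sign}\cos(\tfrac{\alpha}{2}z)$ for real $z$, while $[z]_3>0$ and $[z]_4>0$ unconditionally. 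In particular $[z]_1>0$ for $0<z<\tfrac{2\pi}{\alpha}$ and $[z]_2>0$ for $\abs{z}<\tfrac{\pi}{\alpha}=u_1+u_2+\textsc{m}$.

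First I would treat $a_k$ in the factored form \eqref{ak}. Since every argument entering the coefficients is real, the $r=3$ and $r=4$ factors are quotients of strictly positive values of $[\cdot]_3$ and $[\cdot]_4$, hence automatically positive and irrelevant to the sign. It remains to inspect the $r=1$ and $r=2$ factors for $1\le k\le\textsc{m}$, and the strategy is simply to check that each of their eight arguments lands strictly inside the positivity window of the relevant theta function, using $u_1+u_2+\textsc{m}=\tfrac{\pi}{\alpha}$, the bounds $u_r>0$ and $\abs{v_r}<u_r+\tfrac12$ for $r=1,2$, and $1\le k\le\textsc{m}$. For $r=1$ the arguments $k$, $u_1+k$, $u_1-v_1-\tfrac12+k$ and $u_1-\tfrac12+k$ all lie in $(0,\tfrac{2\pi}{\alpha})$, so $[\cdot]_1$ is positive on each; for $r=2$ the arguments $u_1-u_2+k$, $u_1+k$, $u_1-v_2-\tfrac12+k$ and $u_1-\tfrac12+k$ all have modulus below $\tfrac{\pi}{\alpha}$, so $[\cdot]_2$ is positive on each. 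Hence $a_k>0$.

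For $\tilde a_k$ I would invoke the identity $\tilde a_k=\pi_2(a_k)$ with $\pi_2=(12)(34)$: both the parameter domain \eqref{pd:a} and the relation $\alpha=\pi/(u_1+u_2+\textsc{m})$ are invariant under the simultaneous exchanges $(u_1,v_1)\leftrightarrow(u_2,v_2)$ and $(u_3,v_3)\leftrightarrow(u_4,v_4)$, so the argument just given applies verbatim to the form \eqref{tak} with the roles of the first two parameter pairs swapped, yielding $\tilde a_k>0$. The one point demanding care---and the place where the hypotheses are used in full---is the verification of the interval containments, because several arguments hit the boundary in the extreme cases $k=1$ and $k=\textsc{m}$: for instance $u_1-v_1-\tfrac12+k\to0$ as $k\to1$ and $v_1\to u_1+\tfrac12$, while $u_1-v_2-\tfrac12+k\to\tfrac{\pi}{\alpha}$ as $k\to\textsc{m}$ and $v_2\to-u_2-\tfrac12$. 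It is precisely the strictness of the inequalities $u_r>0$ and $\abs{v_r}<u_r+\tfrac12$ that prevents any argument from reaching an endpoint where $[\cdot]_1$ or $[\cdot]_2$ vanishes, so that all factors stay strictly positive.
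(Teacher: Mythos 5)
Your proof is correct and follows essentially the same route as the paper's: reduce the sign of $a_k$ to that of its $\theta_1$/$\theta_2$ (sine/cosine) factors via the product expansions, verify that all eight arguments lie strictly inside the respective positivity windows $(0,\tfrac{2\pi}{\alpha})$ and $(-\tfrac{\pi}{\alpha},\tfrac{\pi}{\alpha})$ using the parameter domain \eqref{pd:a}, \eqref{pd:b}, and then obtain $\tilde a_k>0$ from $\tilde a_k=\pi_2(a_k)$ together with the $\pi_2$-invariance of the parameter restrictions. The only cosmetic difference is that the paper performs the $\pi_2$ reduction first and states the interval checks as chains of inequalities, whereas you verify $a_k>0$ first and discuss the boundary cases separately; the substance is identical.
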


\begin{proof}
Since $\tilde{a}_k=\pi_2(a_k)$ (cf. Eq. \eqref{tak}) and the parameter restrictions in Eqs. \eqref{pd:a}, \eqref{pd:b}  are invariant with respect to the action of $\pi_2$, it suffices to verify the positivity of $a_k$. To this end we observe from
the product expansions for the theta functions  that the sign of $a_k$ \eqref{ak} coincides with the overall sign of the
principal trigonometric factor:
\begin{equation*}
{\textstyle
\frac{\sin(\frac{\alpha}{2}k )}{\sin(\frac{\alpha}{2}(u_1+k))}\frac{\cos(\frac{\alpha}{2}(u_1-u_2+k))}{\cos(\frac{\alpha}{2}(u_1+k))}
\frac{\sin(\frac{\alpha}{2}(u_1-v_1-\frac{1}{2}+k)}{\sin(\frac{\alpha}{2}(u_1-\frac{1}{2}+k ))}\frac{\cos(\frac{\alpha}{2}(u_1-v_2-\frac{1}{2}+k))}{\cos(\frac{\alpha}{2}(u_1-\frac{1}{2}+k))} } .
\end{equation*}
For parameters in accordance with Eqs. \eqref{pd:a}, \eqref{pd:b} and $1\leq k\leq \textsc{m}$, the positivity of this trigonometric factor is clear
because all sine functions are evaluated at angles between
$0$ and $\pi$:
\begin{equation*}
{\textstyle 0<k<u_1+k<u_1+u_2+\textsc{m}=\frac{\pi}{\alpha}}
\end{equation*}
and
\begin{equation*}
{\textstyle 0<u_1-|v_1|-\frac{1}{2}+k\leq u_1-\frac{1}{2}+k \leq u_1+|v_1|-\frac{1}{2}+k <2u_1+\textsc{m} < 
\frac{2\pi}{\alpha} } ,
\end{equation*}
whereas all cosine functions are evaluated at angles between
$-\frac{\pi}{2}$ and $\frac{\pi}{2}$:
\begin{equation*}
-\frac{\pi}{\alpha} <u_1-u_2+k<u_1+k<\frac{\pi}{\alpha}
\end{equation*}
and
\begin{equation*}
{\textstyle -\frac{\pi}{\alpha}< u_1-u_2<   
u_1-|v_2|-\frac{1}{2}+k\leq u_1-\frac{1}{2}+k\leq u_1+|v_2|-\frac{1}{2}+k < \frac{\pi}{\alpha}}  .
\end{equation*}
\end{proof}

The upshot is that the finite discrete Heun equation \eqref{dh:a}, \eqref{dh:b} encodes the spectral problem for a real-valued finite-dimensional tridiagonal matrix of the form
\begin{subequations}
\begin{equation}\label{H:a}
\mathbf{H} \mathbf{f}=\textsc{e}\mathbf{f},
\end{equation}
with
\begin{equation}\label{H:b}
\mathbf{H}=\begin{bmatrix}
b_0 & \tilde{a}_{\textsc{m}}& 0&  \cdots & 0\\
a_1  &  b_1  &\ddots & & \vdots \\
0 & a_2 & \ddots &\tilde{a}_2&0\\
\vdots  &   &  \ddots   &b_{\textsc{m}-1}&\tilde{a}_1 \\
0 & \cdots & 0&a_{\textsc{m}} &b_{\textsc{m}}
\end{bmatrix}  
\quad\text{and}\quad
\mathbf{f}= \begin{bmatrix}
f_0 \\
f_1  \\
f_2 \\
\vdots\\  f_{\textsc{m}-1}  \\
f_{\textsc{m}} 
\end{bmatrix}  .
\end{equation}
\end{subequations}
In view of the positivity of the matrix elements on the sub- and superdiagonal by virtue of Lemma \ref{positivity:lem},  it is clear that
the spectrum of $\mathbf{H}$ is given by $\textsc{m}+1$ distinct and real eigenvalues
(cf. e.g.  \cite[Chapter III.11.4]{pra:problems}):
\begin{equation}\label{eigenvalues}
\textsc{e}_0>\textsc{e}_1>\dots>\textsc{e}_{\textsc{m}}.
\end{equation}
Moreover, the tridiagonal matrix  $\mathbf{H}$ in Eqs.  \eqref{H:a}, \eqref{H:b} is
quasi-centrosymmetric in the sense that its matrix elements $H_{j,k}$
obey the relation
\begin{equation}\label{qcs}
{H_{\textsc{m}-j,\textsc{m}-k}= \pi_2 \left(   H_{j,k}  \right) \quad \text{for}\ 0\leq j,k\leq \textsc{m}.}
\end{equation}

\section{Elliptic Racah polynomials}\label{sec3}

\subsection{Diagonalization}
Let $p_0(\textsc{e})=1$ and
\begin{equation}\label{e-racah}
p_{k}(\textsc{e})=\det 
\begin{bmatrix}
\textsc{e}-b_0 & -\tilde{a}_{\textsc{m}}& 0&  \cdots & 0\\
-a_1  &  \textsc{e}- b_1  &\ddots & & \vdots \\
0 & -a_2 & \ddots &-\tilde{a}_{\textsc{m}+3-k}&0\\
\vdots  &   &  \ddots   &\textsc{e}-b_{k-2}&-\tilde{a}_{\textsc{m}+2-k} \\
0 & \cdots & 0&-a_{k-1} &\textsc{e}-b_{k-1}
\end{bmatrix}  
\end{equation}
for $k=1,\ldots ,\textsc{m}+1$. In other words, $p_k(\textsc{e})$ is given by the $k$th
principal minor of the matrix $(\textsc{e}\mathbf{I}_{\textsc{m}+1}-\mathbf{H})$ governing the characteristic polynomial of $\mathbf{H}$
\eqref{H:b}. (Here $\mathbf{I}_{\textsc{m}+1}$ denotes the $(\textsc{m}+1)$-dimensional identity matrix.) We will refer to the polynomials
$p_0(\textsc{e}), p_1(\textsc{e}),\ldots ,p_{\textsc{m}+1}(\textsc{e})$ as (monic) \emph{elliptic Racah polynomials}.
By construction, these polynomials capture the characteristic polynomial of $\mathbf{H}$ at the top degree $k=\textsc{m}+1$:
\begin{equation}\label{charpol}
p_{\textsc{m}+1}(\textsc{e})=\det (\textsc{e}\mathbf{I}_{\textsc{m}+1}-\mathbf{H})= 
(\textsc{e}-\textsc{e}_0)(\textsc{e}-\textsc{e}_1)\cdots (\textsc{e}-\textsc{e}_\textsc{m})
\end{equation}
(cf. Eq. \eqref{eigenvalues}).

The following three-term recurrence relation
is manifest from the definition.

\begin{proposition}[Three-term Recurrence Relation]\label{rec:prp}
The monic elliptic Racah polynomials obey the three-term recurrence relation
\begin{equation}\label{rec}
p_{k+1}(\textsc{e})=(\textsc{e}-b_k)p_k(\textsc{e})-a_k\tilde{a}_{\textsc{m}+1-k}p_{k-1}(\textsc{e})\quad \text{for}\
k=0,\ldots,\textsc{m}.
\end{equation}
\end{proposition}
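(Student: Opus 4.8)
The plan is to read off the recurrence directly from a Laplace (cofactor) expansion of the tridiagonal determinant defining $p_{k+1}(\textsc{e})$. Recall that $p_{k+1}(\textsc{e})$ is the $(k+1)\times(k+1)$ leading principal minor of $\textsc{e}\mathbf{I}_{\textsc{m}+1}-\mathbf{H}$; labelling its rows and columns by $0,1,\dots,k$, its diagonal entries are $\textsc{e}-b_0,\dots,\textsc{e}-b_k$, its subdiagonal entries are $-a_1,\dots,-a_k$, and its superdiagonal entries are $-\tilde{a}_{\textsc{m}},-\tilde{a}_{\textsc{m}-1},\dots,-\tilde{a}_{\textsc{m}+1-k}$. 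The reversed indexing of the $\tilde{a}$'s is the one point where one must stay alert, and it is exactly what makes the index on the $\tilde{a}$ in the recurrence come out as $\textsc{m}+1-k$ rather than $k$.

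First I would expand this determinant along its last row (row $k$). Since the matrix is tridiagonal, that row has only two nonzero entries: the diagonal entry $\textsc{e}-b_k$ in column $k$ and the subdiagonal entry $-a_k$ in column $k-1$. The diagonal contribution carries cofactor sign $(-1)^{k+k}=+1$, and the minor obtained by deleting row $k$ and column $k$ is exactly the determinant defining $p_k(\textsc{e})$; this produces the first term $(\textsc{e}-b_k)\,p_k(\textsc{e})$.

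The second step handles the subdiagonal entry $-a_k$, whose cofactor sign is $(-1)^{k+(k-1)}=-1$. Its minor is obtained by deleting row $k$ and column $k-1$; in the resulting $k\times k$ array the old column $k$ becomes the last column and, by tridiagonality, has a single nonzero entry, namely the superdiagonal element $-\tilde{a}_{\textsc{m}+1-k}$ in the bottom row. Expanding this minor once more along that last column, with cofactor sign $(-1)^{(k-1)+(k-1)}=+1$, collapses it to the determinant defining $p_{k-1}(\textsc{e})$. Multiplying the two matrix entries $-a_k$ and $-\tilde{a}_{\textsc{m}+1-k}$ against the two cofactor signs $-1$ and $+1$, the three minus signs combine to a single overall minus, yielding the term $-a_k\tilde{a}_{\textsc{m}+1-k}\,p_{k-1}(\textsc{e})$ and hence the claimed relation.

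Finally I would dispose of the boundary index $k=0$: there the second term is automatically absent because $a_0=0$, so the formula degenerates to $p_1(\textsc{e})=(\textsc{e}-b_0)p_0(\textsc{e})=\textsc{e}-b_0$, consistent with the definitions and requiring no separate convention for $p_{-1}$. There is no genuine obstacle in this argument; the entire content is the double cofactor expansion together with the accompanying sign bookkeeping, and the only thing demanding care is the reversed labelling $\tilde{a}_{\textsc{m}-j}$ of the superdiagonal, which is why the proposition can fairly be described as manifest from the definition.
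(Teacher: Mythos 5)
Your proof is correct and is precisely the paper's argument: the paper disposes of the proposition in one line ("immediate upon expanding the determinant for $p_{k+1}(\textsc{e})$ with respect to the last row/column"), and you have simply carried out that double cofactor expansion explicitly, with the sign bookkeeping and the reversed superdiagonal indexing $\tilde{a}_{\textsc{m}-j}$ handled correctly. Your additional check of the boundary case $k=0$ (where $a_0=0$ makes the $p_{-1}$ term vanish) is a sensible detail the paper leaves implicit.
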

\begin{proof}
Immediate upon expanding the determinant for $p_{k+1}(\textsc{e})$ with respect to the last row/column.
\end{proof}

Moreover, by expanding the determinant $p_k(\textsc{e})$ \eqref{e-racah} as an alternating sum
of products of matrix elements pulled from the distinct rows/columns,
one arrives at the following explicit expression for the  polynomials in question.

\begin{proposition}[Elliptic Racah polynomials]\label{e-racah:prp}
The elliptic Racah polynomial of degree $0\leq k\leq\textsc{m}+1$ is given explicitly by
\begin{align}
&p_k(\textsc{e})=\\
&\sum_{l=0}^{\lfloor k/2\rfloor}(-1)^l
\sum_{\substack{1\leq j_1<j_2<\cdots<j_l< k \\  j_{s+1}-j_s>1\\ \text{for}\ s=1,\ldots ,l-1} } 
a_{j_1}\tilde{a}_{\textsc{m}+1-j_1}\cdots
a_{j_l}\tilde{a}_{\textsc{m}+1-j_l} \prod_{\substack{1\leq j\leq k\\j\not\in\{ j_s,j_s+1\}\\ \text{for}\ s=1,\ldots ,l}}(\textsc{e}-b_{j-1})  \nonumber
\end{align}
(with the convention that empty factors are equal to $1$).
\end{proposition}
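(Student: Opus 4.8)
The plan is to evaluate the tridiagonal determinant in Eq.~\eqref{e-racah} directly from the Leibniz expansion and to isolate the permutations that survive. Denote by $T$ the displayed $k\times k$ matrix, so that $T_{i,i}=\textsc{e}-b_{i-1}$ for $1\leq i\leq k$, $T_{i+1,i}=-a_i$ and $T_{i,i+1}=-\tilde{a}_{\textsc{m}+1-i}$ for $1\leq i\leq k-1$, and $T_{i,j}=0$ whenever $|i-j|>1$. In the sum $\det T=\sum_{\sigma}\mathrm{sgn}(\sigma)\prod_{i=1}^{k}T_{i,\sigma(i)}$ over permutations $\sigma$ of $\{1,\dots,k\}$, a term can be nonzero only if $|\sigma(i)-i|\leq 1$ for every $i$.

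The first key step is the elementary fact that any such $\sigma$ is a product of disjoint transpositions of adjacent indices, every other index being fixed. To see this, let $i$ be the least moved index; since $i-1$ (if present) is already fixed, injectivity forces $\sigma(i)=i+1$, and since the only candidates for $\sigma^{-1}(i)$ are $i-1,i,i+1$ and the first two are excluded, one gets $\sigma(i+1)=i$; repeating on $\{i+2,\dots,k\}$ completes the claim. Consequently the surviving permutations are in bijection with the partial matchings of the path $1\!-\!2\!-\!\cdots\!-\!k$: choosing the edges $\{j_1,j_1+1\},\dots,\{j_l,j_l+1\}$ with $1\leq j_1<\cdots<j_l<k$ corresponds to swapping each pair $j_s\leftrightarrow j_s+1$, and the requirement that these transpositions be disjoint is exactly the gap condition $j_{s+1}-j_s>1$.

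The second step is to read off each summand. A product of $l$ adjacent transpositions has sign $(-1)^l$; each selected edge contributes $T_{j_s,j_s+1}T_{j_s+1,j_s}=(-\tilde{a}_{\textsc{m}+1-j_s})(-a_{j_s})=a_{j_s}\tilde{a}_{\textsc{m}+1-j_s}$, the two minus signs cancelling; and each index $j\notin\{j_s,j_s+1\}$ contributes the diagonal factor $T_{j,j}=\textsc{e}-b_{j-1}$. Multiplying these out and summing over all matchings, organized by the number $l$ of edges (which ranges over $0\leq l\leq\lfloor k/2\rfloor$), reproduces precisely the double sum in the statement.

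The one point requiring care is the sign bookkeeping: one must confirm that the overall sign of each term is $(-1)^l$, coming from $\mathrm{sgn}(\sigma)$ alone, since the two factors $-a_{j_s}$ and $-\tilde{a}_{\textsc{m}+1-j_s}$ within every edge already combine to a positive product. As an independent check, the same formula follows by induction on $k$ from the three-term recurrence of Proposition~\ref{rec:prp}: the term $(\textsc{e}-b_k)p_k$ collects the matchings of the enlarged path in which the new endpoint is left unmatched, while $-a_k\tilde{a}_{\textsc{m}+1-k}\,p_{k-1}$ collects those in which it is matched to its neighbour, and verifying that the two ranges of $l$ dovetail into $0\leq l\leq\lfloor(k+1)/2\rfloor$ closes the induction.
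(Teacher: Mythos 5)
Your proof is correct and takes essentially the same route as the paper's: both expand the tridiagonal determinant via the Leibniz formula, observe that the only surviving permutations are products of $l$ disjoint adjacent transpositions of sign $(-1)^l$ (with the two minus signs of each sub/superdiagonal pair cancelling to give $a_{j_s}\tilde{a}_{\textsc{m}+1-j_s}$), and collect the diagonal factors $\textsc{e}-b_{j-1}$ from the fixed indices. Your explicit justification of the decomposition into disjoint adjacent transpositions and the closing inductive check against the three-term recurrence of Proposition \ref{rec:prp} merely spell out details the paper leaves implicit.
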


\begin{proof}
%An argument can be found in T. Muir's Theory of determinants [1960 edition] (cf. points 545-546 on page 517), but for the sake of completeness, I shall include a formal proof soon.
Let us recall that the determinant of any $k\times k$ matrix
$[A_{i,j}]_{1\leq i,j \leq k }$ is given by an alternating sum of terms $(-1)^\tau A_{1,\tau(1)}A_{2,\tau(2)}\dots A_{k,\tau(k)}$
summed over all permutations $\tau= { \bigl( \begin{smallmatrix}1& 2& \cdots & k \\
 \tau (1)&\tau(2)&\cdots & \tau(k)
 \end{smallmatrix}\bigr)}$ of the symmetric group $S_k$ (where $(-1)^\tau$ refers to the sign of $\tau$).
 In the case of a triangular matrix,
 non-vanishing products can occur only when $\tau$ decomposes as a product of $0\leq l\leq \lfloor k/2\rfloor$ commuting simple transpositions:
\begin{equation*}
  \tau=  (j_1,j_1+1)(j_2,j_2+1)\cdots(j_l,j_l+1)
\end{equation*}
with
\begin{equation*}
1\leq j_1<j_1+1<j_2<j_2+1<\dots<j_l<j_l+1\leq k
\end{equation*}
(so the sign of $\tau$ is equal to $(-1)^l$). In the case of $p_k(\textsc{e})$ \eqref{e-racah}, each transposition
$(j_s,j_s+1)$ contributes a factor $a_{j_s}\tilde{a}_{\textsc{m}+1-j_s}$ to the product, while the indices $j$ that are fixed by $\tau$
each contribute a factor of the form $(\textsc{e}-b_{j-1})$. By collecting the contributions 
\begin{equation*}
(-1)^l a_{j_1}\tilde{a}_{\textsc{m}+1-j_1}\cdots
a_{j_l}\tilde{a}_{\textsc{m}+1-j_l} \prod_{\substack{1\leq j\leq k\\j\not\in\{ j_s,j_s+1\}\\ \text{for}\ s=1,\ldots ,l}}(\textsc{e}-b_{j-1})
\end{equation*}
from all such permutations $\tau$,
the asserted formula for $p_k(\textsc{e})$ follows.
\end{proof}

We are now in the position to solve the finite discrete Heun equation in terms of elliptic Racah polynomials.

\begin{proposition}[Eigenvectors]\label{evec:prp}
For any eigenvalue $\textsc{e}$ in the spectrum $\{ \textsc{e}_0>\textsc{e}_1>\cdots >\textsc{e}_{\textsc{m}}\}$ of
$\mathbf{H}$ \eqref{H:b} (i.e. on shell), the ($\textsc{m}+1$)-dimensional (column) vector $\mathbf{f}(\textsc{e})$ with components given by
normalized elliptic Racah polynomials of the form
\begin{equation}\label{evec}
f_k(\textsc{e})=p_k(\textsc{e})\prod_{0\leq j< k} \tilde{a}_{\textsc{m}-j}^{-1},\quad k=0,1,\dots,\textsc{m},
\end{equation}
solves the corresponding eigenvalue equation \eqref{H:a}.
\end{proposition}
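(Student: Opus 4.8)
The plan is to read off the eigenvalue equation $\mathbf{H}\mathbf{f}(\textsc{e})=\textsc{e}\,\mathbf{f}(\textsc{e})$ one row at a time and recognize that, after absorbing the normalizing prefactor in \eqref{evec}, each interior row collapses onto the three-term recurrence of Proposition \ref{rec:prp}, which holds identically in $\textsc{e}$, while the single boundary row at $k=\textsc{m}$ is governed by the on-shell condition. Writing out row $k$ of the tridiagonal matrix \eqref{H:b}, the equation $\mathbf{H}\mathbf{f}=\textsc{e}\,\mathbf{f}$ is nothing but the finite discrete Heun equation \eqref{dh:a}, namely $a_k f_{k-1}+b_k f_k+\tilde{a}_{\textsc{m}-k}f_{k+1}=\textsc{e}\,f_k$, with the boundary understanding that $a_0=0$ removes the $f_{-1}$ contribution in the top row and that the bottom row $k=\textsc{m}$ carries no superdiagonal (i.e.\ no $f_{\textsc{m}+1}$) term.

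First I would record the elementary telescoping identities satisfied by the normalization $g_k=\prod_{0\le j<k}\tilde{a}_{\textsc{m}-j}^{-1}$ appearing in \eqref{evec}, namely $\tilde{a}_{\textsc{m}-k}\,g_{k+1}=g_k$ and $g_{k-1}=\tilde{a}_{\textsc{m}+1-k}\,g_k$; these prefactors are well defined and nonzero precisely because of the positivity established in Lemma \ref{positivity:lem}. Substituting $f_k=p_k(\textsc{e})\,g_k$ into the interior rows $0\le k\le\textsc{m}-1$ and dividing through by $g_k$, the two identities convert the superdiagonal term into $p_{k+1}(\textsc{e})$ and the subdiagonal term into $a_k\tilde{a}_{\textsc{m}+1-k}p_{k-1}(\textsc{e})$, so that the row reduces to $p_{k+1}=(\textsc{e}-b_k)p_k-a_k\tilde{a}_{\textsc{m}+1-k}p_{k-1}$. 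This is exactly the recurrence \eqref{rec}, which holds as a polynomial identity in $\textsc{e}$; for $k=0$ the factor $a_0=0$ makes the $p_{-1}$ term disappear, consistent with the absent $f_{-1}$. Hence every interior row is satisfied for any value of $\textsc{e}$, on shell or not.

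The crux---and the only place where the spectral hypothesis is used---is the final row $k=\textsc{m}$. Performing the same substitution and dividing by $g_{\textsc{m}}$, this row becomes $a_{\textsc{m}}\tilde{a}_1 p_{\textsc{m}-1}+b_{\textsc{m}}p_{\textsc{m}}=\textsc{e}\,p_{\textsc{m}}$, that is $(\textsc{e}-b_{\textsc{m}})p_{\textsc{m}}-a_{\textsc{m}}\tilde{a}_1 p_{\textsc{m}-1}=0$. By the recurrence \eqref{rec} evaluated at $k=\textsc{m}$, the left-hand side is precisely $p_{\textsc{m}+1}(\textsc{e})$, which by \eqref{charpol} is the characteristic polynomial of $\mathbf{H}$. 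Since $\textsc{e}$ lies in the spectrum $\{\textsc{e}_0,\dots,\textsc{e}_{\textsc{m}}\}$, we have $p_{\textsc{m}+1}(\textsc{e})=0$, so the boundary row closes up exactly on shell. I expect the only genuine subtlety to be bookkeeping: keeping the superdiagonal index $\tilde{a}_{\textsc{m}-k}$ aligned with the recurrence index $\tilde{a}_{\textsc{m}+1-k}$ through the telescoping, and recognizing that the boundary row is not a polynomial identity but rather the vanishing of the characteristic polynomial, which is exactly what promotes the recurrence solution to a genuine eigenvector.
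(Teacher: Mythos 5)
Your proof is correct and is essentially the paper's own argument run in reverse: where the paper starts from the recurrence \eqref{rec} plus the on-shell vanishing $p_{\textsc{m}+1}(\textsc{e})=0$ and multiplies by the normalization $\prod_{0\leq j<k}\tilde{a}_{\textsc{m}-j}^{-1}$ to produce the rows of \eqref{dh:a}, you substitute $f_k=p_k g_k$ into those rows and divide by $g_k$ to recover the recurrence and the boundary condition. The ingredients (Proposition \ref{rec:prp}, the telescoping of the prefactors, and \eqref{charpol} for the last row) are identical, so no further comment is needed.
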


\begin{proof}
Since on shell we assume that $\textsc{e}$ belongs to the spectrum of $\mathbf{H}$ and
$p_{\textsc{m}+1}(\textsc{e})=\det (\textsc{e}\mathbf{I}_{\textsc{m}+1}-\mathbf{H})$, it is clear
that $p_{\textsc{m}+1}(\textsc{e})=0$ in this situation. The three-term recurrence
relation \eqref{rec} for the elliptic Racah polynomials then affirms that on shell:
\begin{equation*}
\textsc{e}p_k(\textsc{e})=
\begin{cases}
p_{k+1}(\textsc{e}) +a_k\tilde{a}_{\textsc{m}+1-k}p_{k-1}(\textsc{e})+ b_k p_k(\textsc{e}) &
\text{for}\ k=0,\ldots,\textsc{m}-1,\\
a_\textsc{m}\tilde{a}_{1}p_{\textsc{m}-1}(\textsc{e})+ b_\textsc{m} p_\textsc{m}(\textsc{e}) &
\text{for}\ k=\textsc{m}.
\end{cases}
\end{equation*}
Multiplication of the $k$th equation by $\prod_{0\leq j< k} \tilde{a}_{\textsc{m}-j}^{-1}$ on both sides and rewriting the result
in terms of $f_k(\textsc{e})$ for $k=0,\ldots,\textsc{m}$, verifies that on shell the components of the vector $\mathbf{f}(\textsc{e})$ solve the finite discrete Heun equation
\eqref{dh:a}, \eqref{dh:b}.
\end{proof}

Since all eigenvalues \eqref{eigenvalues} are simple and $\mathbf{f}(\textsc{e})$ is not a null vector (because $f_0(\textsc{e})=1$),
it is clear that the corresponding eigenvectors in Proposition \ref{evec:prp} provide an eigenbasis diagonalizing $\mathbf{H}$:
\begin{subequations}
\begin{equation}\label{diag:a}
\mathbf{F}^{-1}\mathbf{H}\mathbf{F} =\mathbf{E}
\end{equation}
with
\begin{equation}\label{diag:b}
 \mathbf{F}=\bigl[\mathbf{f}(\textsc{e}_0), \mathbf{f}(\textsc{e}_1),\ldots ,\mathbf{f}(\textsc{e}_\textsc{m})\bigr]  \quad
 \text{and}\quad
 \mathbf{E}=\text{diag} (\textsc{e}_0,\textsc{e}_1,\ldots,\textsc{e}_{\textsc{m}}) .
\end{equation}
\end{subequations}
Moreover, upon pulling out the normalization constants from the rows of $\mathbf{F}$ and bringing the resulting
$(\textsc{m}+1)\times (\textsc{m}+1)$ matrix of monic elliptic Racah polynomials to
Vandermonde form via unitriangular row operations, it is readily seen that
\begin{equation}\label{detF}
\det (\mathbf{F} ) = (-1)^{\frac{1}{2}\textsc{m}(\textsc{m}+1)}   
    \prod_{1\leq l\leq \textsc{m}}\tilde{a}_l^{-l}\prod_{0\leq j<k\leq \textsc{m}}(\textsc{e}_j-\textsc{e}_k) .
\end{equation}

\subsection{Orthogonality relation}

The three-term recurrence relation in Proposition \ref{rec:prp} guarantees that the elliptic Racah polynomials obey
the Christoffel-Darboux formulas from the theory of orthogonal polynomials (cf. \cite[Chapter 3.2]{sze:orthogonal}).

\begin{lemma}[Christoffel-Darboux Formulas]\label{CD:lem}
For any $0\leq n\leq\textsc{m}$,  the polynomials
$p_0,\ldots, p_{n+1}$ enjoy the following Christoffel-Darboux identities:
\begin{subequations}
\begin{equation}\label{CD}
\sum_{k=0}^n\dfrac{p_k(\textsc{x})p_k(\textsc{y})}{\prod_{j=1}^k a_j \tilde{a}_{\textsc{m}+1-j}}=\dfrac{p_{n+1}(\textsc{x})p_n(\textsc{y})-p_n(\textsc{x})p_{n+1}(\textsc{y})}{(\textsc{x}-\textsc{y})\prod_{j=1}^n a_j \tilde{a}_{\textsc{m}+1-j}} ,
\end{equation}
and
\begin{equation}\label{CDc}
\sum_{k=0}^n\dfrac{p_k^2 (\textsc{x})}{\prod_{j=1}^k a_j \tilde{a}_{\textsc{m}+1-j}}=\dfrac{p^\prime_{n+1}(\textsc{x})p_n(\textsc{x})-p^\prime_n(\textsc{x})p_{n+1}(\textsc{x})}{\prod_{j=1}^n a_j \tilde{a}_{\textsc{m}+1-j}}.
\end{equation}
\end{subequations}
\end{lemma}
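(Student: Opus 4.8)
The plan is to derive the first Christoffel--Darboux identity \eqref{CD} directly from the three-term recurrence relation of Proposition \ref{rec:prp} by the classical cross-difference and telescoping argument, and then to obtain the confluent identity \eqref{CDc} as the diagonal limit $\textsc{y}\to\textsc{x}$ of \eqref{CD}. Throughout I abbreviate $\lambda_k=a_k\tilde{a}_{\textsc{m}+1-k}$ and introduce the bilinear "Wronskian'' quantity $W_k=p_{k+1}(\textsc{x})p_k(\textsc{y})-p_k(\textsc{x})p_{k+1}(\textsc{y})$, which is exactly the numerator appearing on the right-hand side of \eqref{CD} when $k=n$.

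First I would write the recurrence \eqref{rec} once with argument $\textsc{x}$ and once with argument $\textsc{y}$, multiply the former by $p_k(\textsc{y})$ and the latter by $p_k(\textsc{x})$, and subtract. The diagonal contributions $b_k\,p_k(\textsc{x})p_k(\textsc{y})$ cancel, the terms carrying the spectral variables combine into $(\textsc{x}-\textsc{y})\,p_k(\textsc{x})p_k(\textsc{y})$, and the two $\lambda_k$-terms reassemble, using $p_{k-1}(\textsc{x})p_k(\textsc{y})-p_{k-1}(\textsc{y})p_k(\textsc{x})=-W_{k-1}$, into $+\lambda_k W_{k-1}$. This produces the one-step relation $W_k-\lambda_k W_{k-1}=(\textsc{x}-\textsc{y})\,p_k(\textsc{x})p_k(\textsc{y})$ for $k\ge 1$. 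Dividing by $\prod_{j=1}^{k}\lambda_j$ turns the left-hand side into the forward difference $W_k\big/\!\prod_{j=1}^{k}\lambda_j-W_{k-1}\big/\!\prod_{j=1}^{k-1}\lambda_j$, so that summing over $k=1,\dots,n$ telescopes cleanly. The base case is handled by the direct evaluation $W_0=p_1(\textsc{x})-p_1(\textsc{y})=\textsc{x}-\textsc{y}$, which matches the $k=0$ term $(\textsc{x}-\textsc{y})\,p_0(\textsc{x})p_0(\textsc{y})$ of the target sum; absorbing it yields $W_n\big/\!\prod_{j=1}^{n}\lambda_j=(\textsc{x}-\textsc{y})\sum_{k=0}^{n}p_k(\textsc{x})p_k(\textsc{y})\big/\!\prod_{j=1}^{k}\lambda_j$, which is \eqref{CD} after division by $(\textsc{x}-\textsc{y})$.

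For the confluent identity \eqref{CDc} I would let $\textsc{y}\to\textsc{x}$ in \eqref{CD}. The left-hand side tends to $\sum_{k=0}^{n}p_k^2(\textsc{x})\big/\!\prod_{j=1}^{k}\lambda_j$ by continuity, while the right-hand side is a $0/0$ expression whose numerator $W_n$ vanishes on the diagonal $\textsc{y}=\textsc{x}$. A single application of l'H\^opital's rule in $\textsc{y}$, legitimate since the $p_k$ are polynomials, replaces $W_n/(\textsc{x}-\textsc{y})$ by $p'_{n+1}(\textsc{x})p_n(\textsc{x})-p'_n(\textsc{x})p_{n+1}(\textsc{x})$, which is precisely the numerator in \eqref{CDc}.

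The argument is bookkeeping rather than analysis, so the only place demanding care is matching the index shift $\tilde{a}_{\textsc{m}+1-j}$ against the empty-product conventions so that the telescoping closes exactly at both endpoints; in particular one must verify that the degenerate $k=0$ step (where $a_0=0$) is absorbed by the direct computation of $W_0$ rather than by the one-step relation. I do not anticipate any genuine obstacle, since the positivity from Lemma \ref{positivity:lem} guarantees $\lambda_j>0$ for $1\le j\le\textsc{m}$, so the running products $\prod_{j=1}^{k}\lambda_j$ never vanish on the required range, and the recurrence \eqref{rec} is valid for exactly the indices $k=1,\dots,n\le\textsc{m}$ used in the telescoping.
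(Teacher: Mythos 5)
Your proposal is correct and takes essentially the same route as the paper: the cross-difference of the three-term recurrence yields the one-step relation $W_k-a_k\tilde{a}_{\textsc{m}+1-k}W_{k-1}=(\textsc{x}-\textsc{y})\,p_k(\textsc{x})p_k(\textsc{y})$, which the paper resolves by downward iteration (your division by running products followed by telescoping is the same computation in different bookkeeping), and then obtains \eqref{CDc} by the confluent limit $\textsc{y}\to\textsc{x}$ exactly as you do. Your explicit treatment of the base case $W_0=\textsc{x}-\textsc{y}$ and of the degenerate $k=0$ step (where $a_0=0$) merely makes precise what the paper leaves implicit.
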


\begin{proof}
From the three-term recurrence \eqref{rec} it follows that
\begin{align*}
&p_{n+1}(\textsc{x})p_n(\textsc{y})-p_n(\textsc{x})p_{n+1}(\textsc{y})=\\
&(\textsc{x}-\textsc{y})p_n(\textsc{x})p_n(\textsc{y})+
a_n\tilde{a}_{\textsc{m}+1-n}\bigl(p_n(\textsc{x})p_{n-1}(\textsc{y})-p_{n-1}(\textsc{x})p_n(\textsc{y})\bigr) .
\end{align*}
Downward iteration entails that
\begin{equation*}
p_{n+1}(\textsc{x})p_n(\textsc{y})-p_n(\textsc{x})p_{n+1}(\textsc{y})=(\textsc{x}-\textsc{y})\sum_{k=0}^n\Bigl(p_k(\textsc{x})p_k(\textsc{y})\prod_{j=k+1}^n a_j\tilde{a}_{\textsc{m}+1-j}\Bigr).
\end{equation*}
Upon dividing both sides by $(\textsc{x}-\textsc{y})\prod_{j=1}^n a_j\tilde{a}_{\textsc{m}+1-j}$ the Christoffel-Darboux formula in
Eq. \eqref{CD} is immediate, while Eq. \eqref{CDc} follows subsequently via the confluent limit $\textsc{y}\to \textsc{x}$.
\end{proof}

Let
\begin{equation}
\mathbf{\tilde{H}}=\pi_2 \bigl( \mathbf{H}\bigr),\quad
     \mathbf{\tilde{f}}(\textsc{e})= \pi_2\bigl( \mathbf{f}(\textsc{e}) \bigr) ,\quad \tilde{p}_k(\textsc{e})=\pi_2 \bigl(  p_k(\textsc{e})  \bigr),
     \quad\text{and}\ \tilde{\textsc{e}}_j=\pi_2 (\textsc{e}_j).
\end{equation}
From Eq. \eqref{qcs} one learns that the matrices $\mathbf{\tilde{H}}$ and $\mathbf{ H}$ are related by conjugation with the
$(\textsc{m}+1)\times (\textsc{m}+1)$ palindromic-involution matrix:
\begin{equation}\label{J}
   \mathbf{\tilde{H}}= \mathbf{J} \mathbf{H} \mathbf{J}\quad\text{with}\quad  \mathbf{J}=\begin{bmatrix}
0&0&\dots&0&1\\
0&0&\iddots&1&0\\
\vdots&\iddots&\iddots&\iddots&\vdots\\
0&1&\iddots&0&0\\
1&0&\dots&0&0
\end{bmatrix} .
\end{equation}

\begin{lemma}[Palindromic Quasi Symmetry]\label{pqs:lem}
\begin{subequations}
\emph{(i)} For any $0\leq j\leq \textsc{m}$, one has that
\begin{equation}\label{quasi-palindromy}
\tilde{\textsc{e}}_j=\textsc{e}_j\quad \text{and}\quad 
 \mathbf{\tilde{f}}(\textsc{e}_j) =\epsilon_j \mathbf{J} \mathbf{f}(\textsc{e}_j)\quad
 \text{with}\ \epsilon_j= \frac{\tilde{p}_{\textsc{m}}(\textsc{e}_j)}{a_1\cdots a_{\textsc{m}}}.
\end{equation}

\emph{(ii)} Let $\tilde{\epsilon}_j=\pi_2(\epsilon_j)= p_{\textsc{m}}(\textsc{e}_j)/(\tilde{a}_1\cdots \tilde{a}_{\textsc{m}})$.
Then
\begin{equation}\label{epsj}
 \epsilon_j\tilde{\epsilon}_j=1\quad\text{and}\quad   \mathrm{Sign}\, (\epsilon_j )=\mathrm{Sign}\, (\tilde{\epsilon}_j )=(-1)^j.
\end{equation}
\end{subequations}
\end{lemma}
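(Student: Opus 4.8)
The plan is to exploit the conjugation relation $\tilde{\mathbf{H}}=\mathbf{J}\mathbf{H}\mathbf{J}$ of Eq.~\eqref{J} together with the involutivity $\mathbf{J}^2=\mathbf{I}_{\textsc{m}+1}$ and the simplicity of the spectrum \eqref{eigenvalues}. For the eigenvalue identity in part (i), I would first note that conjugation by $\mathbf{J}$ leaves the characteristic polynomial unchanged, so that
\[
\pi_2\bigl(p_{\textsc{m}+1}(\textsc{e})\bigr)=\det(\textsc{e}\mathbf{I}_{\textsc{m}+1}-\tilde{\mathbf{H}})=\det\bigl(\mathbf{J}(\textsc{e}\mathbf{I}_{\textsc{m}+1}-\mathbf{H})\mathbf{J}\bigr)=p_{\textsc{m}+1}(\textsc{e}).
\]
Hence the characteristic polynomial of $\mathbf{H}$ is $\pi_2$-invariant. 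Since the parameter domain \eqref{pd:a}, \eqref{pd:b} is stable under $\pi_2$, the number $\tilde{\textsc{e}}_j=\pi_2(\textsc{e}_j)$ is precisely the $(j+1)$-st largest eigenvalue of $\tilde{\mathbf{H}}=\pi_2(\mathbf{H})$; as $\tilde{\mathbf{H}}$ and $\mathbf{H}$ share the same ordered spectrum, this yields $\tilde{\textsc{e}}_j=\textsc{e}_j$.

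For the eigenvector identity, I would apply $\pi_2$ to the eigenvalue equation $\mathbf{H}\mathbf{f}(\textsc{e}_j)=\textsc{e}_j\mathbf{f}(\textsc{e}_j)$ of Proposition~\ref{evec:prp} to see that $\tilde{\mathbf{f}}(\textsc{e}_j)$ solves $\tilde{\mathbf{H}}\tilde{\mathbf{f}}(\textsc{e}_j)=\textsc{e}_j\tilde{\mathbf{f}}(\textsc{e}_j)$, while $\mathbf{J}\mathbf{f}(\textsc{e}_j)$ satisfies the same equation because $\tilde{\mathbf{H}}\mathbf{J}\mathbf{f}(\textsc{e}_j)=\mathbf{J}\mathbf{H}\mathbf{f}(\textsc{e}_j)=\textsc{e}_j\mathbf{J}\mathbf{f}(\textsc{e}_j)$. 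Simplicity of $\textsc{e}_j$ then forces $\tilde{\mathbf{f}}(\textsc{e}_j)=\epsilon_j\mathbf{J}\mathbf{f}(\textsc{e}_j)$ for a scalar $\epsilon_j$. To identify $\epsilon_j$ I would compare the bottom ($k=\textsc{m}$) components: the right-hand side equals $\epsilon_j f_0(\textsc{e}_j)=\epsilon_j$ since $f_0=1$, whereas $\tilde{f}_{\textsc{m}}(\textsc{e}_j)=\pi_2\bigl(p_{\textsc{m}}(\textsc{e}_j)\prod_{l=1}^{\textsc{m}}\tilde{a}_l^{-1}\bigr)=\tilde{p}_{\textsc{m}}(\textsc{e}_j)\prod_{l=1}^{\textsc{m}}a_l^{-1}$ by Eq.~\eqref{evec} and $\pi_2(\tilde{a}_l)=a_l$ (cf. Eq.~\eqref{tak}), producing the stated $\epsilon_j$.

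For part (ii), the relation $\epsilon_j\tilde{\epsilon}_j=1$ follows by applying $\pi_2$ to $\tilde{\mathbf{f}}(\textsc{e}_j)=\epsilon_j\mathbf{J}\mathbf{f}(\textsc{e}_j)$, which, using $\pi_2(\mathbf{J})=\mathbf{J}$, $\pi_2(\tilde{\mathbf{f}})=\mathbf{f}$ and part (i), gives $\mathbf{f}(\textsc{e}_j)=\tilde{\epsilon}_j\mathbf{J}\tilde{\mathbf{f}}(\textsc{e}_j)=\epsilon_j\tilde{\epsilon}_j\,\mathbf{f}(\textsc{e}_j)$; since $\mathbf{f}(\textsc{e}_j)\neq\mathbf{0}$ (its top entry is $f_0=1$), one concludes $\epsilon_j\tilde{\epsilon}_j=1$. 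For the sign I would use that $\tilde{\epsilon}_j=p_{\textsc{m}}(\textsc{e}_j)/(\tilde{a}_1\cdots\tilde{a}_{\textsc{m}})$ has a positive denominator by Lemma~\ref{positivity:lem}, so $\mathrm{Sign}(\tilde{\epsilon}_j)=\mathrm{Sign}\bigl(p_{\textsc{m}}(\textsc{e}_j)\bigr)$; evaluating the confluent Christoffel–Darboux identity \eqref{CDc} at the root $\textsc{x}=\textsc{e}_j$ of $p_{\textsc{m}+1}$ (Eq.~\eqref{charpol}) with $n=\textsc{m}$ shows $p'_{\textsc{m}+1}(\textsc{e}_j)\,p_{\textsc{m}}(\textsc{e}_j)>0$, and since $p'_{\textsc{m}+1}(\textsc{e}_j)=\prod_{i\neq j}(\textsc{e}_j-\textsc{e}_i)$ carries exactly $j$ negative factors, $\mathrm{Sign}\bigl(p_{\textsc{m}}(\textsc{e}_j)\bigr)=(-1)^j$. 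Finally $\mathrm{Sign}(\epsilon_j)=\mathrm{Sign}(\tilde{\epsilon}_j)$ because $\epsilon_j\tilde{\epsilon}_j=1>0$.

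The main obstacle I anticipate is the pointwise eigenvalue matching $\tilde{\textsc{e}}_j=\textsc{e}_j$ in part (i): the conjugation $\tilde{\mathbf{H}}=\mathbf{J}\mathbf{H}\mathbf{J}$ only delivers equality of spectra as sets, and the naive route would be to fix the ordering by a continuity argument deforming to the $\pi_2$-symmetric locus $u_1=u_2$, $v_1=v_2$, $u_3=u_4$, $v_3=v_4$. I expect the cleaner approach to be the characteristic-polynomial invariance above, which bypasses continuity entirely; once $\tilde{\textsc{e}}_j=\textsc{e}_j$ is secured, the remaining steps reduce to routine linear algebra combined with the already-established Christoffel–Darboux (Lemma~\ref{CD:lem}) and positivity (Lemma~\ref{positivity:lem}) results.
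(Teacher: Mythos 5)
Your proposal is correct and follows essentially the same route as the paper's proof: similarity of $\tilde{\mathbf{H}}=\mathbf{J}\mathbf{H}\mathbf{J}$ plus simplicity of the spectrum to get $\tilde{\mathbf{f}}(\textsc{e}_j)=\epsilon_j\mathbf{J}\mathbf{f}(\textsc{e}_j)$, comparison of last components to identify $\epsilon_j$, the involution $\mathbf{J}^2=\mathbf{I}_{\textsc{m}+1}$ for $\epsilon_j\tilde{\epsilon}_j=1$, and the confluent Christoffel--Darboux identity \eqref{CDc} at $\textsc{x}=\textsc{e}_j$ together with the eigenvalue ordering \eqref{eigenvalues} for the sign $(-1)^j$. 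Your only addition is to spell out, via invariance of the characteristic polynomial and the decreasing-ordering convention applied to both spectra, the pointwise matching $\tilde{\textsc{e}}_j=\textsc{e}_j$ that the paper simply asserts as clear from the similarity transformation.
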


\begin{proof}
Because  $\tilde{\mathbf{H}}$ and  $\mathbf{H}$   are related by a similarity transformation,
it is clear that $\tilde{\textsc{e}}_j =\textsc{e}_j$ and
\begin{equation*}
 \mathbf{H} \mathbf{J} \mathbf{\tilde{f}}(\textsc{e}_j)= \mathbf{J}  \mathbf{\tilde{H}}  \mathbf{\tilde{f}}(\textsc{e}_j)
  =  \textsc{e}_j \mathbf{J} \mathbf{\tilde{f}}(\textsc{e}_j).
\end{equation*}
Since the eigenvalue $\textsc{e}_j$ is simple, this
implies that $\mathbf{J}\mathbf{\tilde{f}}(\textsc{e}_j)=\epsilon_j \mathbf{f}(\textsc{e}_j)$ for some  constant $\epsilon_j=\epsilon_j(u_r,v_r)$ in
$ \mathbb{R}$.   
Upon comparing the last components on both sides of  the second equality in Eq. \eqref{quasi-palindromy}, we see that
$\epsilon_j= \epsilon_j f_0(\textsc{e}_j)=\tilde{f}_{\textsc{m}}(\textsc{e}_j)= \tilde{p}_{\textsc{m}}(\textsc{e}_j)/(a_1\cdots a_{\textsc{m}})$, which proves \emph{(i)}. 

Twice iterated application of Eq. \eqref{quasi-palindromy} shows that
$\mathbf{\tilde{f}}(\textsc{e}_j)=\epsilon_j\mathbf{J} \mathbf{f}(\textsc{e}_j)=\epsilon_j \tilde{\epsilon}_j \mathbf{\tilde{f}}(\textsc{e}_j)$ (since $\mathbf{J}^2=\mathbf{I}_{\textsc{m}+1}$), so
 $\epsilon_j \tilde{\epsilon}_j =1$ (as
$\mathbf{\tilde{f}}(\textsc{e}_j)$ is not a null vector). To compute the sign of $\epsilon_j$, 
we
evaluate the Christoffel-Darboux identity in Eq. \eqref{CDc} for $n=\textsc{m}$ at $\textsc{x}=\textsc{e}_j$ by  means of the factorization $p_{\textsc{m}+1}(\textsc{x})= (\textsc{x}-\textsc{e}_0)(\textsc{x}-\textsc{e}_1)\cdots (\textsc{x}-\textsc{e}_{\textsc{m}})$; this reveals
that
$
p_{\textsc{m}}(\textsc{e}_j) \prod_{\substack{0\leq l\leq \textsc{m}\\ l\neq j}}
( \textsc{e}_j-\textsc{e}_l )=p_{\textsc{m}}(\textsc{e}_j)p^\prime_{\textsc{m}+1}(\textsc{e}_j) >0 ,
$
so
\begin{equation*}
\text{Sign} (\epsilon_j )=\text{Sign} (\tilde{\epsilon}_j ) =\text{Sign}  \bigl( p_{\textsc{m}}(\textsc{e}_j) \bigr)
= \text{Sign} \Bigl( \prod_{\substack{0\leq l\leq \textsc{m}\\ l\neq j}} ( \textsc{e}_j-\textsc{e}_l )\Bigr) =(-1)^{j} 
\end{equation*}
(upon recalling the ordering of the eigenvalues from Eq. \eqref{eigenvalues}). This completes the proof of \emph{(ii)}.
\end{proof}

Notice that it follows from the relation $\epsilon_j\tilde{\epsilon}_j=1$ in Lemma \ref{pqs:lem} that
\begin{equation}\label{pM}
p_{\textsc{m}} (\textsc{e}_j) \tilde{p}_{\textsc{m}}(\textsc{e}_j) = \prod_{1\leq k\leq\textsc{m}} a_k\tilde{a}_k .
\end{equation}
Moreover, if
\begin{subequations}
\begin{equation}\label{cs-cond}
(u_1,v_1)=(u_2,v_2)\quad \text{and}\quad (u_3,v_3)=(u_4,v_4),
\end{equation}
then our matrix becomes centrosymmetric: $\mathbf{J}\mathbf{H}\mathbf{J}=\mathbf{\tilde{H}}=\mathbf{H}$. We then have that $\tilde{\epsilon}_j=\epsilon_j$ with $ \epsilon_j^2=1$, so 
\begin{equation}
\epsilon_j=(-1)^j
\end{equation}
\end{subequations}
on this particular parameter manifold enjoying palindromic symmetry.

The orthogonality relations for the elliptic Racah polynomials are governed by the positive weights
\begin{align}\label{weights}
 \Delta_k&=\prod_{1\leq l \leq k}  \frac{\tilde{a}_{\textsc{m}+1-l}}{a_l} \qquad\qquad
 (\text{for}\ k=0,1\ldots,\textsc{m}) \\
 &= {\textstyle  \frac{[2u_1+2k]_1}{[2u_1]_1} }
 \prod_{\substack{ 1\leq l \leq k\\ 1\leq r\leq 4 }}  
 {\textstyle \frac{[u_2-u_{\pi_2(r)}+\textsc{m}+1-l]_r [u_2-v_{\pi_2(r)}+\textsc{m}+\frac{1}{2}-l]_r}{[u_1-u_r+l]_r [u_1-v_r-\frac{1}{2}+l]_r}} \nonumber
\end{align}
 (where the expression was simplified using the duplication formula $[2z]_1=2\prod_{1\leq r\leq 4} [z]_r$ for the scaled theta functions).

\begin{proposition}[Orthogonality Relation]\label{or:prp}
The normalized elliptic Racah polynomials $f_k(\textsc{e})$ \eqref{evec} satisfy
the orthogonality relation
\begin{subequations}
\begin{equation}\label{or:a}
\sum_{k=0}^\textsc{m} f_k(\textsc{e}_i) f_k(\textsc{e}_j) \Delta_k
=\begin{cases}
N_j&\text{if}\ i=j, \\
0&\text{if}\ i\neq j,
\end{cases}
\end{equation}
for $0\leq i,j\leq\textsc{m}$, with
\begin{align}\label{or:b}
N_j=&\frac{1}{|\epsilon_j |\, a_1\cdots a_{\textsc{m}}}\prod_{\substack{0\leq l\leq \textsc{m} \\ l\neq j}}| \textsc{e}_j-\textsc{e}_l| \\
=& \frac{1}{|\epsilon_j|} \prod_{\substack{ 1\leq k\leq \textsc{m}\\ 1\leq r\leq 4}}{\textstyle \frac{[u_1+k]_r}{[u_1-u_r+k]_r} \frac{[u_1-\frac{1}{2}+k]_r}{[u_1-v_r-\frac{1}{2}+k ]_r}}
\prod_{\substack{0\leq l\leq \textsc{m} \\ l\neq j}}| \textsc{e}_j-\textsc{e}_l| . \nonumber 
\end{align}
\end{subequations}
\end{proposition}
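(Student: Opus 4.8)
The plan is to recognize the weighted bilinear sum on the left-hand side of \eqref{or:a} as the Christoffel--Darboux kernel for the monic polynomials $p_k$, and then to evaluate this kernel on shell by means of Lemma \ref{CD:lem}. First I would rewrite the normalization: since $\prod_{0\le j<k}\tilde a_{\textsc{m}-j}=\prod_{l=1}^k\tilde a_{\textsc{m}+1-l}$, the definition \eqref{evec} of $f_k$ together with the weight \eqref{weights} produces the clean cancellation
\[
f_k(\textsc{e}_i)\,f_k(\textsc{e}_j)\,\Delta_k=\frac{p_k(\textsc{e}_i)\,p_k(\textsc{e}_j)}{\prod_{l=1}^k a_l\,\tilde a_{\textsc{m}+1-l}},
\]
so that $\sum_{k=0}^{\textsc{m}}f_k(\textsc{e}_i)f_k(\textsc{e}_j)\Delta_k$ becomes precisely the left-hand side of the Christoffel--Darboux identities \eqref{CD} and \eqref{CDc} taken at $n=\textsc{m}$. (Positivity of the weights $\Delta_k$ is clear from Lemma \ref{positivity:lem}.)

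For the off-diagonal case $i\neq j$ I would set $\textsc{x}=\textsc{e}_i$ and $\textsc{y}=\textsc{e}_j$ in \eqref{CD}; since $p_{\textsc{m}+1}=\det(\textsc{e}\mathbf{I}_{\textsc{m}+1}-\mathbf{H})$ vanishes at every eigenvalue by \eqref{charpol}, both factors $p_{\textsc{m}+1}(\textsc{e}_i)$ and $p_{\textsc{m}+1}(\textsc{e}_j)$ in the numerator vanish and the sum is zero, which is exactly the asserted orthogonality. For the diagonal case $i=j$ I would instead invoke the confluent formula \eqref{CDc} at $\textsc{x}=\textsc{e}_j$; the term $p'_{\textsc{m}}(\textsc{e}_j)\,p_{\textsc{m}+1}(\textsc{e}_j)$ in the numerator drops out because $p_{\textsc{m}+1}(\textsc{e}_j)=0$, leaving
\[
N_j=\frac{p'_{\textsc{m}+1}(\textsc{e}_j)\,p_{\textsc{m}}(\textsc{e}_j)}{\prod_{l=1}^{\textsc{m}}a_l\,\tilde a_{\textsc{m}+1-l}}=\frac{p'_{\textsc{m}+1}(\textsc{e}_j)\,p_{\textsc{m}}(\textsc{e}_j)}{(a_1\cdots a_{\textsc{m}})(\tilde a_1\cdots\tilde a_{\textsc{m}})}.
\]

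The remaining task is to match this with \eqref{or:b}, which is where the sign bookkeeping enters and which I expect to be the most delicate point. Using the factorization $p_{\textsc{m}+1}(\textsc{x})=\prod_{l=0}^{\textsc{m}}(\textsc{x}-\textsc{e}_l)$ gives $p'_{\textsc{m}+1}(\textsc{e}_j)=\prod_{l\neq j}(\textsc{e}_j-\textsc{e}_l)$, whose sign is $(-1)^j$ by the ordering \eqref{eigenvalues}. Writing $p_{\textsc{m}}(\textsc{e}_j)=\tilde\epsilon_j(\tilde a_1\cdots\tilde a_{\textsc{m}})$ from Lemma \ref{pqs:lem}\emph{(ii)} and recalling that $\mathrm{Sign}(\tilde\epsilon_j)=(-1)^j$ there, the two sign factors cancel, so $p'_{\textsc{m}+1}(\textsc{e}_j)\,p_{\textsc{m}}(\textsc{e}_j)=|\tilde\epsilon_j|\,(\tilde a_1\cdots\tilde a_{\textsc{m}})\prod_{l\neq j}|\textsc{e}_j-\textsc{e}_l|$. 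This already confirms $N_j>0$ and, after cancelling $\tilde a_1\cdots\tilde a_{\textsc{m}}$ and using $\epsilon_j\tilde\epsilon_j=1$ so that $|\tilde\epsilon_j|=1/|\epsilon_j|$, yields the first line of \eqref{or:b}. Finally I would substitute the explicit expression \eqref{ak} for the product $a_1\cdots a_{\textsc{m}}$ to arrive at the second, fully explicit line of \eqref{or:b}.
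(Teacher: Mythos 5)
Your proposal is correct and follows essentially the same route as the paper's own proof: rewrite $f_k(\textsc{e}_i)f_k(\textsc{e}_j)\Delta_k$ as the monic Christoffel--Darboux kernel, apply Lemma \ref{CD:lem} at $n=\textsc{m}$ together with the on-shell vanishing of $p_{\textsc{m}+1}$ from Eq.~\eqref{charpol}, and then use Lemma \ref{pqs:lem} (via $p_{\textsc{m}}(\textsc{e}_j)=\tilde{a}_1\cdots\tilde{a}_{\textsc{m}}/\epsilon_j$ and the sign relations) to simplify the quadratic norm. Your explicit sign bookkeeping, showing that $\mathrm{Sign}\,(p'_{\textsc{m}+1}(\textsc{e}_j))=\mathrm{Sign}\,(\tilde{\epsilon}_j)=(-1)^j$ cancel to give the absolute-value form of $N_j$, only spells out what the paper leaves as ``readily simplifies,'' and you correctly read the Christoffel--Darboux index as $n=\textsc{m}$ where the paper's text says $n=\textsc{m}+1$.
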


\begin{proof}
The asserted orthogonality follows by combining the Christoffel-Darboux formula of
Lemma \ref{CD:lem} for $n=\textsc{m}+1$ with Eq. \eqref{charpol}:
\begin{equation*}
 \sum_{k=0}^{\textsc{m}}  f_k(\textsc{e}_i) f_k(\textsc{e}_j)  \Delta_k 
     = \sum_{k=0}^{\textsc{m}} \dfrac{p_k(\textsc{e}_i) p_k(\textsc{e}_j)}{\prod_{1\leq j \leq k} a_j \tilde{a}_{\textsc{m}+1-j}}  =
     \begin{cases}
    \dfrac{p^\prime_{\textsc{m}+1}(\textsc{e}_j) p_{\textsc{m}}(\textsc{e}_j)}{\prod_{1\leq j \leq \textsc{m}}a_j \tilde{a}_{\textsc{m}+1-j}}&\text{if}\ i=j,\\
    \qquad 0&\text{if}\ i\neq j.
     \end{cases}
\end{equation*}
Indeed, since $p^\prime_{\textsc{m}+1}(\textsc{e}_j)=\prod_{\substack{0\leq l\leq \textsc{m}\\ l\neq j}} ( \textsc{e}_j-\textsc{e}_l )$
and
$ p_{\textsc{m}}(\textsc{e}_j)=  \tilde{a}_1\cdots \tilde{a}_{\textsc{m}} /\epsilon_j $ (by Eqs. \eqref{quasi-palindromy},
\eqref{epsj}), 
the expression for the quadratic norm readily simplifies to the formula stated in the lemma.
\end{proof}

The orthogonality relation in Proposition \ref{or:prp} supplies the following expressions for
the inverse and the determinant of the elliptic Racah matrix
$\mathbf{F}$ from Eqs. \eqref{diag:a}, \eqref{diag:b}.

\begin{corollary}[Inverse Elliptic Racah Matrix]\label{Finv:cor}
The inverse and the determinant of the elliptic Racah matrix $\mathbf{F}$ \eqref{diag:b} are given by
\begin{subequations}
\begin{equation}\label{Finv}
\mathbf{F}^{-1}= \mathbf{N}^{-1} \mathbf{F}^T \boldsymbol{\Delta}
\end{equation}
and
\begin{equation}\label{Fdet}
    \det (\mathbf{F})=(-1)^{\frac{1}{2}\textsc{m}(\textsc{m}+1)} \prod_{0\leq l \leq \textsc{m}}
    {\textstyle \left(\frac{N_l}{\Delta_l}\right)^{1/2} },
\end{equation}
\end{subequations}
where $\mathbf{N}=\mathrm{diag}\, (N_0,N_1,\ldots,N_\textsc{m})$,
$\boldsymbol{\Delta}=\mathrm{diag}\, (\Delta_0,\Delta_1,\ldots,\Delta_{\textsc{m}})$
and 
\begin{equation*}
  \mathbf{F}^T=
 \begin{bmatrix}
f_0(\textsc{e}_0) & f_1(\textsc{e}_0)&  \cdots & f_{\textsc{m}}(\textsc{e}_0) \\
f_0(\textsc{e}_1) & f_1(\textsc{e}_1)&  \cdots & f_{\textsc{m}}(\textsc{e}_1) \\
\vdots &  \vdots &  \vdots  & \vdots \\
f_0(\textsc{e}_{\textsc{m}-1}) & f_1(\textsc{e}_{\textsc{m}-1})&  \cdots & f_{\textsc{m}}(\textsc{e}_{\textsc{m}-1})  \\
f_0(\textsc{e}_{\textsc{m}}) & f_1(\textsc{e}_{\textsc{m}})&  \cdots & f_{\textsc{m}}(\textsc{e}_{\textsc{m}})
\end{bmatrix} .
\end{equation*}
\end{corollary}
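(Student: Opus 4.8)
The plan is to recognize the orthogonality relation of Proposition \ref{or:prp} as a single matrix identity and then read off both assertions from it. Writing out the $(i,j)$ entry of the product $\mathbf{F}^T\boldsymbol{\Delta}\mathbf{F}$, the $i$th row of $\mathbf{F}^T$ carries the entries $f_k(\textsc{e}_i)$ while the $j$th column of $\mathbf{F}$ carries the entries $f_k(\textsc{e}_j)$, so that
\[
(\mathbf{F}^T\boldsymbol{\Delta}\mathbf{F})_{i,j}=\sum_{k=0}^{\textsc{m}} f_k(\textsc{e}_i)\,\Delta_k\, f_k(\textsc{e}_j).
\]
By the orthogonality relation \eqref{or:a} this sum equals $N_j$ when $i=j$ and vanishes otherwise, whence the compact identity $\mathbf{F}^T\boldsymbol{\Delta}\mathbf{F}=\mathbf{N}$.

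From this identity the inverse formula is immediate: left-multiplying by $\mathbf{N}^{-1}$ gives $(\mathbf{N}^{-1}\mathbf{F}^T\boldsymbol{\Delta})\mathbf{F}=\mathbf{I}_{\textsc{m}+1}$, which exhibits $\mathbf{N}^{-1}\mathbf{F}^T\boldsymbol{\Delta}$ as a left inverse of the square matrix $\mathbf{F}$ and hence as its two-sided inverse. This is precisely the assertion \eqref{Finv}.

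For the determinant I take determinants in $\mathbf{F}^T\boldsymbol{\Delta}\mathbf{F}=\mathbf{N}$ and use $\det(\mathbf{F}^T)=\det(\mathbf{F})$ together with the diagonal form of $\boldsymbol{\Delta}$ and $\mathbf{N}$, obtaining
\[
\det(\mathbf{F})^2\prod_{0\leq l\leq\textsc{m}}\Delta_l=\prod_{0\leq l\leq\textsc{m}}N_l,
\]
so that $\det(\mathbf{F})^2=\prod_{l}(N_l/\Delta_l)$. Here each $\Delta_l>0$ by \eqref{weights} and each $N_l>0$ by \eqref{or:b} (the factors $a_k$ are positive by Lemma \ref{positivity:lem} and $|\epsilon_l|>0$), so the square roots on the right-hand side of \eqref{Fdet} are well defined.

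The only point requiring an extra ingredient is the overall sign of $\det(\mathbf{F})$, which the squared identity alone cannot fix. To resolve it I invoke the explicit evaluation \eqref{detF}: the product $\prod_{1\leq l\leq\textsc{m}}\tilde{a}_l^{-l}$ is positive by Lemma \ref{positivity:lem}, and each difference $\textsc{e}_j-\textsc{e}_k$ with $j<k$ is positive by the ordering \eqref{eigenvalues}, so $\mathrm{Sign}(\det(\mathbf{F}))=(-1)^{\frac{1}{2}\textsc{m}(\textsc{m}+1)}$. Combining this sign with the magnitude $\prod_l (N_l/\Delta_l)^{1/2}$ yields \eqref{Fdet}. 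I expect this sign bookkeeping to be the only mildly delicate step; everything else is a direct transcription of the orthogonality relation into matrix language.
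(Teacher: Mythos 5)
Your proof is correct and follows exactly the route the paper intends: recasting Proposition \ref{or:prp} as $\mathbf{F}^T\boldsymbol{\Delta}\mathbf{F}=\mathbf{N}$ yields \eqref{Finv} and the magnitude of $\det(\mathbf{F})$, while the sign is pinned down by the Vandermonde evaluation \eqref{detF} together with Lemma \ref{positivity:lem} and the ordering \eqref{eigenvalues}. Your observation that the squared identity alone cannot fix the sign, and that \eqref{detF} is the needed extra ingredient, is precisely the point the paper leaves implicit (and is what makes the subsequent comparison of \eqref{Fdet} with \eqref{detF} yield the product formula for the $\epsilon_j$).
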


If one compares Eq. \eqref{Fdet} with the evaluation of the determinant in  Eq. \eqref{detF}, then it follows that
\begin{subequations}
\begin{equation}
\epsilon_0\epsilon_1\cdots\epsilon_{\textsc{m}}= (-1)^{\frac{1}{2}\textsc{m}(\textsc{m}+1)} 
  \prod_{1\leq l\leq \textsc{m}}  \left( \frac{\tilde{a}_l}{a_l} \right)^{l} ,
\end{equation}
or equivalently (cf. Eq. \eqref{pM})
\begin{equation}
p_{\textsc{m}}(\textsc{e}_0)p_{\textsc{m}}(\textsc{e}_1)\cdots p_{\textsc{m}}(\textsc{e}_\textsc{m})=
(-1)^{\frac{1}{2}\textsc{m}(\textsc{m}+1)} 
  \prod_{1\leq l\leq \textsc{m}}   (a_l \tilde{a}_{\textsc{m}+1-l})^l .
\end{equation}
\end{subequations}

\subsection{Finite discrete Heun function}

In order to describe the complete solution of the
finite discrete Heun equation \eqref{dh:a}, \eqref{dh:b} in terms of elliptic Racah polynomials,
the following theorem summarizes the main findings of this section.

\begin{theorem}[Finite Discrete Heun Function]\label{dhsol:thm}

\begin{subequations}
\emph{(i)} The finite discrete Heun equation \eqref{dh:a}, \eqref{dh:b} only possesses nontrivial solutions for $\textsc{e}\in \{ \textsc{e}_0,\ldots ,\textsc{e}_{\textsc{m}}\}$, where $\textsc{e}_0>\textsc{e}_1>\cdots>\textsc{e}_{\textsc{m}}$ 
 denote the roots of the top-degree elliptic Racah polynomial $p_{\textsc{m}+1}(\textsc{e})$ \eqref{e-racah}.

\emph{(ii)}  The solutions of the finite discrete Heun equation from part \emph{(i)} are given by
\begin{equation}\label{fh:a}
h_k (\textsc{e}_j)=h_0(\textsc{e}_j) p_k(\textsc{e}_j) \prod_{0\leq j<k} \tilde{a}_{\textsc{m}-j}^{-1}  \qquad (k=0,\ldots ,\textsc{m}) 
\end{equation}
with
\begin{equation}\label{fh:b}
h_0(\textsc{e}_j) =|\epsilon_j|^{1/2} =
\left|\frac{\tilde{a}_1\cdots\tilde{a}_\textsc{m}}{p_\textsc{m}(\textsc{e}_j)}\right|^{1/2}
\end{equation}
(cf. Proposition \ref{evec:prp}).

\emph{(iii)} The finite discrete Heun function \eqref{fh:a}, \eqref{fh:b} satisfies the 
 palindromic quasi-symmetry
\begin{equation}
 \tilde{h}_k(\tilde{\textsc{e}}_j)= (-1)^{j} h_{\textsc{m}-k} (\textsc{e}_j) \quad\text{with}\quad
  \tilde{\textsc{e}}_j=\textsc{e}_j\quad  (j,k=0,\ldots ,\textsc{m}),
\end{equation}
where  $\tilde{h}_k(\tilde{\textsc{e}}_j)$ denotes  the finite discrete Heun function
with permuted coupling parameters:
$(u_1,v_1) \leftrightarrow (u_2,v_2)$ and $(u_3,v_3) \leftrightarrow (u_4,v_4)$ (cf. Lemma \ref{pqs:lem}).

\emph{(iv)} The finite discrete Heun functions satisfy the orthogonality relation
\begin{align}
  & \sum_{k=0}^{\textsc{m}}  h_k(\textsc{e}_i) h_k(\textsc{e}_j)  
 \prod_{1\leq j \leq k} \tilde{a}_{\textsc{m}+1-j}   \prod_{k+1\leq j\leq\textsc{m}} {\textstyle a_j}   \\
   &=
   \begin{cases}
   \prod_{\substack{0\leq l\leq \textsc{m}\\ l\neq j}} | \textsc{e}_j-\textsc{e}_l |  &\text{if}\ i=j,\\
   \ 0 &\text{if}\ i\neq j
   \end{cases} \nonumber
  \end{align}
  ($0\leq i,j\leq\textsc{m}$), and the dual orthogonality relation
  \begin{equation}
   \sum_{j=0}^{\textsc{m}}  
   \frac{ h_l(\textsc{e}_j) h_k(\textsc{e}_j) } {\prod_{\substack{0\leq i\leq \textsc{m}\\ i\neq j}} | \textsc{e}_j-\textsc{e}_i |}
   =
   \begin{cases}
    \left(\prod_{1\leq j \leq k} \tilde{a}_{\textsc{m}+1-j}   \prod_{k+1\leq j\leq\textsc{m}} {\textstyle a_j}\right)^{-1}
  &\text{if}\ l=k,\\
   \ 0 &\text{if}\ l\neq k
   \end{cases} 
  \end{equation}
 ($0\leq l,k\leq\textsc{m}$), which encode respectively the row and column orthogonality of the elliptic Racah matrix
 $\mathbf{F}$ \eqref{diag:b} (cf. Proposition \ref{or:prp}).
\end{subequations}
\end{theorem}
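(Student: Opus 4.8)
The plan is to assemble the four parts from the propositions and lemmas already in hand, treating the theorem as a consolidation rather than a fresh computation. For part \emph{(i)} I would observe that the finite discrete Heun equation \eqref{dh:a}, \eqref{dh:b} is by construction the componentwise form of the eigenvalue equation $\mathbf{H}\mathbf{f} = \textsc{e}\mathbf{f}$ for the tridiagonal matrix \eqref{H:b}; since Eq. \eqref{charpol} identifies $\det(\textsc{e}\mathbf{I}_{\textsc{m}+1} - \mathbf{H}) = p_{\textsc{m}+1}(\textsc{e})$, a nonzero solution exists precisely when $\textsc{e}$ is a root of $p_{\textsc{m}+1}$, i.e. one of the $\textsc{m}+1$ simple eigenvalues $\textsc{e}_0 > \cdots > \textsc{e}_{\textsc{m}}$ of \eqref{eigenvalues}. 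Part \emph{(ii)} is then Proposition \ref{evec:prp} with the overall scale fixed by $h_0(\textsc{e}_j)$; the only thing to check is that the two expressions in \eqref{fh:b} agree, which follows from $\tilde{\epsilon}_j = p_{\textsc{m}}(\textsc{e}_j)/(\tilde{a}_1\cdots\tilde{a}_{\textsc{m}})$ together with the relation $\epsilon_j\tilde{\epsilon}_j = 1$ of Lemma \ref{pqs:lem}, giving $\epsilon_j = \tilde{a}_1\cdots\tilde{a}_{\textsc{m}}/p_{\textsc{m}}(\textsc{e}_j)$.

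For part \emph{(iii)} I would read off the component form of the vector identity $\mathbf{\tilde{f}}(\textsc{e}_j) = \epsilon_j\mathbf{J}\mathbf{f}(\textsc{e}_j)$ of Lemma \ref{pqs:lem}\emph{(i)}, namely $\tilde{f}_k(\textsc{e}_j) = \epsilon_j f_{\textsc{m}-k}(\textsc{e}_j)$, the matrix $\mathbf{J}$ of \eqref{J} reversing the index. Multiplying through by the normalizations $h_0(\textsc{e}_j) = |\epsilon_j|^{1/2}$ and $\tilde{h}_0(\textsc{e}_j) = |\tilde{\epsilon}_j|^{1/2}$, and using $|\tilde{\epsilon}_j|^{1/2} = |\epsilon_j|^{-1/2}$ (from $\epsilon_j\tilde{\epsilon}_j = 1$) together with $\mathrm{Sign}(\epsilon_j) = (-1)^j$ from Lemma \ref{pqs:lem}\emph{(ii)}, the factor $\epsilon_j$ collapses to $(-1)^j$, yielding $\tilde{h}_k(\tilde{\textsc{e}}_j) = (-1)^j h_{\textsc{m}-k}(\textsc{e}_j)$ with $\tilde{\textsc{e}}_j = \textsc{e}_j$.

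Part \emph{(iv)} is where the actual work lies. For the row orthogonality I would first rewrite the new weight as a rescaling of $\Delta_k$: a direct telescoping gives $\prod_{1\leq l\leq k}\tilde{a}_{\textsc{m}+1-l}\prod_{k+1\leq l\leq\textsc{m}}a_l = (a_1\cdots a_{\textsc{m}})\,\Delta_k$. Substituting $h_k(\textsc{e}_j) = |\epsilon_j|^{1/2} f_k(\textsc{e}_j)$ reduces the left-hand side to $|\epsilon_i|^{1/2}|\epsilon_j|^{1/2}(a_1\cdots a_{\textsc{m}})\sum_{k} f_k(\textsc{e}_i)f_k(\textsc{e}_j)\Delta_k$, which by Proposition \ref{or:prp} vanishes for $i\neq j$ and for $i = j$ equals $|\epsilon_j|(a_1\cdots a_{\textsc{m}})N_j$; the first line of \eqref{or:b} shows this collapses exactly to $\prod_{l\neq j}|\textsc{e}_j-\textsc{e}_l|$, the claimed normalization. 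The dual (column) orthogonality I would obtain purely formally: writing $\mathbf{G} = [h_k(\textsc{e}_j)]$, the row relation reads $\mathbf{G}\mathbf{W}\mathbf{G}^T = \mathbf{D}$ with $\mathbf{W}$ the diagonal weight matrix and $\mathbf{D} = \mathrm{diag}(\prod_{l\neq j}|\textsc{e}_j-\textsc{e}_l|)$; since both factors are invertible this forces $\mathbf{G}^{-1} = \mathbf{W}\mathbf{G}^T\mathbf{D}^{-1}$, and multiplying on the right by $\mathbf{G}$ gives $\mathbf{G}^T\mathbf{D}^{-1}\mathbf{G} = \mathbf{W}^{-1}$, which is precisely the stated dual relation.

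The bookkeeping is routine once the normalizations are tracked correctly; the one genuine subtlety, and the place I would be most careful, is the interplay of the sign $\mathrm{Sign}(\epsilon_j) = (-1)^j$ with the square-root normalization $|\epsilon_j|^{1/2}$, since it is exactly this that converts the merely similarity-related quasi-palindromy of Lemma \ref{pqs:lem} into the clean signed symmetry of part \emph{(iii)} and symmetrizes the orthogonality weight so that the row/column duality of part \emph{(iv)} goes through with no leftover constants.
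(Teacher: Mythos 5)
Your proposal is correct and follows exactly the route the paper intends: the theorem is stated there without a separate proof, as a summary assembled from Eq.~\eqref{charpol}, Proposition~\ref{evec:prp}, Lemma~\ref{pqs:lem}, and Proposition~\ref{or:prp}, which is precisely the consolidation you carry out. Your explicit verifications---the identity $\prod_{1\leq l\leq k}\tilde{a}_{\textsc{m}+1-l}\prod_{k+1\leq l\leq\textsc{m}}a_l=(a_1\cdots a_{\textsc{m}})\Delta_k$, the sign bookkeeping $|\epsilon_j|^{-1/2}\epsilon_j=(-1)^j|\epsilon_j|^{1/2}$, and the matrix inversion argument for the dual relation---correctly fill in the details the paper leaves implicit.
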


\section{Degenerations}\label{sec4}

\subsection{Finite discrete Lam\'e equation}
If all parameters $v_r$ tend to zero, then the difference Heun operator $\mathrm{H}$ \eqref{dH:eq}--\eqref{cr} reduces to a
second-order difference operator stemming from the Sklyanin algebra \cite{ros:sklyanin,skl:some,spi:continuous,rai-rui:difference}:
\begin{equation}
(\mathrm{H}f)(z)= f(z+1) \prod_{1\leq r\leq 4}{\textstyle \frac{[z+u_r]_r}{[z]_r} } +
f(z-1) \prod_{1\leq r\leq 4}{\textstyle \frac{[z+u_r]_r}{[z]_r} }  .
\end{equation}
The corresponding parameter degeneration of Theorem \ref{dhsol:thm} solves a finite discrete Heun equation of the form
\begin{subequations}
\begin{equation}\label{ds:a}
\tilde{a}_{\textsc{m}-k} f_{k+1}+ a_k f_{k-1} =  \textsc{e} f_k\quad\text{for}\ k=0,\ldots ,\textsc{m},
\end{equation}
with
\begin{equation}\label{ds:b}
a_k = \prod_{1\leq r\leq 4}{\textstyle \frac{[u_1-u_r+k]_r}{[u_1+k]_r}}
\quad\text{and}\quad
\tilde{a}_k = \prod_{1\leq r\leq 4}{\textstyle \frac{[u_2-u_{\pi_2(r)}+k]_r}{[u_2+k]_r}} .
\end{equation}
\end{subequations}
Via the duplication formula $[2z]_1=2\prod_{1\leq r\leq 4}[z]_r$ it is seen that
if all parameters $u_r$ are equal (to $u>0$ say), then the eigenvalue problem in Eqs. \eqref{ds:a}, \eqref{ds:b}
reduces to a finite discrete Lam\'e equation of the form studied in \cite{die-gor:elliptic}:
\begin{equation}
 {\textstyle \frac{[\textsc{m}-k]_1}{[u+\textsc{m}-k]_1}} f_{k+1}+  {\textstyle \frac{[k]_1}{[u +k]_1}} f_{k-1} =  \textsc{e} f_k\quad\text{for}\ k=0,\ldots ,\textsc{m},
\end{equation}
with $\alpha =\frac{2\pi}{2u+\textsc{m}}$ (so $2u+\textsc{m}=\frac{2\pi}{\alpha}$).

\subsection{Trigonometric limit: \emph{q}-Racah polynomials}
In \cite{die-gor:elliptic} it was shown that the solutions of the finite discrete Lam\'e equation \eqref{ds:a}, \eqref{ds:b} can be expressed in terms of Rogers' $q$-ultraspherical polynomials in the trigonometric limit $p\to 0$.
Here we finish by checking that  the elliptic Racah polynomials degenerate in turn to the  $q$-Racah polynomials of Askey and Wilson in the limit $p\to 0$. 
To this end let us first observe that in the trigonometric limit the scaled theta functions degenerate as follows:
\begin{equation*}
\lim_{p\to 0}[z]_1=\tfrac{2}{\alpha}\sin(\tfrac{\alpha}{2}z),\quad
\lim_{p\to 0}[z]_2=\cos(\tfrac{\alpha}{2}z),\quad
\lim_{p\to 0}[z]_3=1,\quad
\lim_{p\to 0}[z]_4=1 .
\end{equation*}
The corresponding coefficients of the difference Heun equation thus become
\begin{subequations}
\begin{equation}\label{A-trig}
\textsc{a}_{\mathrm{t}} (z)=\lim_{p\to 0} \textsc{a} (z)={\textstyle
\frac{\sin(\frac{\alpha}{2}(z+u_1))}{\sin(\frac{\alpha}{2}z)}
\frac{\cos(\frac{\alpha}{2}(z+u_2))}{\cos(\frac{\alpha}{2}z)}
\frac{\sin (\frac{\alpha}{2}(z+ \frac{1}{2}+v_1)}{\sin(\frac{\alpha}{2}(z+\frac{1}{2}))}
\frac{\cos(\frac{\alpha}{2}(z+\frac{1}{2}+v_2))}{\cos(\frac{\alpha}{2}(z+\frac{1}{2}))} }
\end{equation}
and
\begin{align}\label{B-trig}
\textsc{b}_{\mathrm{t}} (z)=\lim_{p\to 0} \textsc{b} (z)=&
c_{\mathrm{t},1}
{\textstyle \frac{\sin(\frac{\alpha}{2}(z+\frac{1}{2}+u))}{\sin(\frac{\alpha}{2}(z+\frac{1}{2}))}\frac{\sin(\frac{\alpha}{2}(z-\frac{1}{2}-u))}{\sin(\frac{\alpha}{2}(z-\frac{1}{2}))} }\\&
+c_{\mathrm{t},2}
{\textstyle \frac{\cos(\frac{\alpha}{2}(z+\frac{1}{2}+u))}{\cos(\frac{\alpha}{2}(z+\frac{1}{2}))}\frac{\cos(\frac{\alpha}{2}(z-\frac{1}{2}-u))}{\cos(\frac{\alpha}{2}(z-\frac{1}{2}))}}
+c_{\mathrm{t},3}+c_{\mathrm{t},4} \nonumber
\end{align}
\end{subequations}
with
\begin{equation*}
c_{\mathrm{t},r}=
{\textstyle \frac{2\sin(\frac{\alpha}{2}(u_{\pi_r(1)}-\frac{1}{2}) )\sin(\frac{\alpha}{2}v_{\pi_r(1)})
\cos(\frac{\alpha}{2}(u_{\pi_r(2)}-\frac{1}{2}) ) \cos(\frac{\alpha}{2}v_{\pi_r(2)})}{\sin(\frac{\alpha}{2}u)\sin(\frac{\alpha}{2}(u+1))} }.
\end{equation*}
We now have that
\begin{equation}\label{Ct}
\textsc{a}_{\mathrm{t}} (z)+\textsc{a}_{\mathrm{t}} (-z)+\textsc{b}_{\mathrm{t}} (z)=\textsc{c}_{\mathrm{t}}=
{\textstyle 2\cos \frac{\alpha}{2} (u_1+u_2+v_1+v_2)}+ \sum_{1\leq r\leq 4} c_{\mathrm{t},r} .
\end{equation}
Indeed, as a periodic function of $z$ all poles on the LHS of Eq. \eqref{Ct} are seen to cancel, while for $\text{Im}\, (z)\to \infty$ the expression in question tends to the constant value on the RHS.

The $q$-Racah polynomials \cite{ask-wil:set} are basic hypergeometric orthogonal polynomials of the form \cite[Chapter 4.2]{koe-les-swa:hypergeometric}:
\begin{subequations}
\begin{equation}
R_k(\textsc{x})=R_k(\textsc{x}(x);\mathrm{a},\mathrm{b},\mathrm{c},\mathrm{d} | q )=
{}_4\phi_3 \bigl( \substack{q^{-k},  \mathrm{ab} q^{k+1}, q^{-x}, \mathrm{cd}q^{x+1} \\ \mathrm{a}q,\mathrm{bd}q,\mathrm{c}q } ;q,q \bigr)
\end{equation}
with
\begin{equation}
\textsc{x}=\textsc{x}(x)=\mathrm{cd}q^{x+1}+q^{-x} .
\end{equation}
\end{subequations}
From the basic hypergeometric representation the $k\leftrightarrow j$, $\mathrm{a}\leftrightarrow \mathrm{c}$, $\mathrm{b}\leftrightarrow \mathrm{d}$ duality symmetry \cite{ask-wil:set,leo:orthogonal}
of the $q$-Racah polynomial $R_k(\textsc{x}(j);\mathrm{a},\mathrm{b},\mathrm{c},\mathrm{d} | q )$ is immediate:
\begin{subequations}
\begin{equation}\label{qRdual:a}
R_k(\textsc{x}(j);\mathrm{a},\mathrm{b},\mathrm{c},\mathrm{d} | q )=
R_j(\hat{\textsc{x}}(k);\mathrm{\hat a},\mathrm{\hat b},\mathrm{\hat c},\mathrm{\hat d} | q ),
\end{equation}
with
\begin{equation}\label{qRdual:b}
\hat{\textsc{x}}(x)=\mathrm{\hat{c} \hat{d}}q^{x+1}+q^{-x} \quad\text{and}\quad
(\mathrm{\hat a},\mathrm{\hat b},\mathrm{\hat c},\mathrm{\hat d})=(\mathrm{c},\mathrm{d},\mathrm{a},\mathrm{b})  .
\end{equation}
\end{subequations}
The following proposition recovers the $q$-Racah polynomials as a trigonometric limit of the elliptic Racah polynomials.

\begin{proposition}[$q$-Racah Limit]\label{qR:prp}
For $1\leq j,k\leq \textsc{m}$ and parameters in accordance with Eqs. \eqref{pd:a}, \eqref{pd:b}, one has that
\begin{subequations}
\begin{equation}\label{qR:a}
\lim_{p\to 0}\textsc{e}_j=\textsc{e}_{\mathrm{t},j}= {\textstyle 2\cos \frac{\alpha}{2} (2j+u_1+u_2+v_1+v_2)}+\sum_{1\leq r\leq 4} c_{\mathrm{t},r}
\end{equation}
and
\begin{align}\label{qR:b}
\lim_{p\to 0} f_k(\textsc{e})&=f_{\mathrm{t},k}(\textsc{e})= R_k(\textsc{x}(x);\mathrm{a},\mathrm{b},\mathrm{c},\mathrm{d} | q ) \\
&=
{}_4\phi_3 \left( \substack{q^{-k},   q^{2u_1+k}, q^{-x}, q^{u_1+u_2+v_1+v_2+x} \\ -q^{u_1+u_2},q^{u_1+v_1+1/2},-q^{ u_1+v_2+1/2}} ;q,q \right),
\nonumber
\end{align}
where 
\begin{equation}\label{qR:c}
q=e^{\mathrm{i}\alpha},\quad \textsc{e}=  {\textstyle 2\cos \frac{\alpha}{2} (2x+u_1+u_2+v_1+v_2)}+\sum_{1\leq r\leq 4} c_{\mathrm{t},r} 
\end{equation}
and
\begin{equation}\label{qR:d}
 \mathrm{a}= -q^{u_1+u_2-1},\  \mathrm{b}=-q^{u_1-u_2}  ,\  \mathrm{c}=-q^{u_1+v_2-1/2},\mathrm{d}=-q^{u_2+v_1-1/2} .
\end{equation}
\end{subequations}
\end{proposition}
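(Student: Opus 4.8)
The plan is to reduce the entire statement to a matching of three-term recurrence relations. By Proposition~\ref{evec:prp} the normalized polynomials $f_k(\textsc{e})$ solve the finite discrete Heun equation \eqref{dh:a}, i.e.\ $\textsc{e}f_k=\tilde{a}_{\textsc{m}-k}f_{k+1}+a_kf_{k-1}+b_kf_k$ with $f_0=1$, and $f_k$ is a polynomial of degree $k$ in $\textsc{e}$. On the other side, $R_k(\textsc{x}(x);\mathrm{a},\mathrm{b},\mathrm{c},\mathrm{d}\mid q)$ is a polynomial of degree $k$ in $\textsc{x}=\textsc{x}(x)$ obeying the standard $q$-Racah three-term recurrence in the degree index \cite[Ch.~4.2]{koe-les-swa:hypergeometric}, with $R_0\equiv 1$ and, since the ${}_4\phi_3$ in \eqref{qR:b} terminates at its first term when $x=0$, with $R_k(\textsc{x}(0))=1$ for all $k$. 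Because \eqref{qR:c}, \eqref{qR:d} identify $\textsc{e}$ with $\textsc{x}(x)$ through the affine relation $\textsc{x}(x)-\textsc{x}(0)=q^{S/2}(\textsc{e}-\textsc{c}_{\mathrm{t}})$, where $S=u_1+u_2+v_1+v_2$ and $\textsc{c}_{\mathrm{t}}$ is the constant in \eqref{Ct}, it suffices to show that the $p\to 0$ limits of the recurrence coefficients coincide, under the dictionary \eqref{qR:d}, with the $q$-Racah coefficients; the equality $f_{\mathrm{t},k}=R_k$ then follows by induction on $k$ from the common initial data $f_{\mathrm{t},0}=R_0=1$ and $C_0=0$. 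Everything commutes with the limit since the matrix is finite-dimensional with entries depending continuously on $p$.

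Second, I would compute the limiting coefficients. Substituting the stated degenerations of the scaled theta functions into \eqref{ak}, \eqref{tak} produces $a_{\mathrm{t},k}=\textsc{a}_{\mathrm{t}}(-u_1-k)$ and $\tilde{a}_{\mathrm{t},\textsc{m}-k}=\textsc{a}_{\mathrm{t}}(u_1+k)$ as explicit products of sines and cosines, with $\textsc{a}_{\mathrm{t}}$ as in \eqref{A-trig}. For the diagonal I would avoid expanding \eqref{B-trig} directly and instead evaluate the identity \eqref{Ct} at $z=u_1+k$, which yields $\tilde{a}_{\mathrm{t},\textsc{m}-k}+a_{\mathrm{t},k}+b_{\mathrm{t},k}=\textsc{c}_{\mathrm{t}}$. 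Hence $b_{\mathrm{t},k}=\textsc{c}_{\mathrm{t}}-\tilde{a}_{\mathrm{t},\textsc{m}-k}-a_{\mathrm{t},k}$, and the limiting recurrence collapses to the clean form
\[
(\textsc{e}-\textsc{c}_{\mathrm{t}})\,f_{\mathrm{t},k}=\tilde{a}_{\mathrm{t},\textsc{m}-k}\bigl(f_{\mathrm{t},k+1}-f_{\mathrm{t},k}\bigr)+a_{\mathrm{t},k}\bigl(f_{\mathrm{t},k-1}-f_{\mathrm{t},k}\bigr).
\]
This simultaneously exhibits the all-ones vector as the eigenvector at $\textsc{e}=\textsc{c}_{\mathrm{t}}=\textsc{e}_{\mathrm{t},0}$ (mirroring $R_k(\textsc{x}(0))=1$) and reduces the diagonal matching to the off-diagonal one.

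Third, multiplying through by $q^{S/2}$ and using the affine relation, the comparison with the $q$-Racah recurrence in its forced form $(\textsc{x}(x)-\textsc{x}(0))R_k=A_k(R_{k+1}-R_k)+C_k(R_{k-1}-R_k)$ becomes the two identities $A_k=q^{S/2}\tilde{a}_{\mathrm{t},\textsc{m}-k}$ and $C_k=q^{S/2}a_{\mathrm{t},k}$, where $A_k,C_k$ are the explicit $q$-Racah recurrence coefficients. Here I would set $q=e^{\mathrm{i}\alpha}$ and rewrite each trigonometric factor as a $q$-shifted monomial via $2\mathrm{i}\sin\theta=e^{\mathrm{i}\theta}(1-e^{-2\mathrm{i}\theta})$ and $2\cos\theta=e^{\mathrm{i}\theta}(1+e^{-2\mathrm{i}\theta})$, then insert \eqref{qR:d}. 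The truncation index $\textsc{m}$ enters only through $q^{-\textsc{m}}=-q^{u_1+u_2}=\mathrm{a}q$ (from $\alpha(u_1+u_2+\textsc{m})=\pi$), which is precisely the condition making the ${}_4\phi_3$ a polynomial of degree $\le\textsc{m}$ and which converts the $\textsc{m}$-dependent factors of $\tilde{a}_{\mathrm{t},\textsc{m}-k}$ into the proper $q$-Racah factors. I expect \emph{this} step to be the main obstacle: the bookkeeping of transcribing the limiting trigonometric products into $q$-shifted factorials and matching them factor by factor, including all powers of $q$ and all signs, against the tabulated coefficients of \cite[Ch.~4.2]{koe-les-swa:hypergeometric}. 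The reduction via \eqref{Ct} in the previous step is what keeps this tractable, since it removes the awkward diagonal term $\textsc{b}_{\mathrm{t}}$ from the computation entirely.

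Finally, for the eigenvalues I would invoke \eqref{charpol}: the $\textsc{e}_j$ are the roots of $p_{\textsc{m}+1}$, so by the preceding paragraphs their $p\to 0$ limits are the roots of the limiting polynomial $p_{\mathrm{t},\textsc{m}+1}$, which is proportional to the degree-$(\textsc{m}+1)$ $q$-Racah polynomial. Its zeros are exactly the points of the $q$-Racah orthogonality grid $\textsc{x}(j)$, $j=0,\dots,\textsc{m}$, by standard finite-support orthogonal-polynomial theory (cf.\ \cite[Ch.~3]{sze:orthogonal}); transporting these back through the affine relation of \eqref{qR:c} gives exactly the values $\textsc{e}_{\mathrm{t},j}$ of \eqref{qR:a}. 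The decreasing ordering \eqref{eigenvalues} is inherited in the limit and is reconfirmed by observing that $2\cos\frac{\alpha}{2}(2j+S)$ is strictly decreasing in $j$, the arguments $\frac{\alpha}{2}(2j+S)$ staying in the range where the cosine is injective by virtue of the constraints \eqref{pd:a}, \eqref{pd:b}.
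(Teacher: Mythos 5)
Your treatment of the limit \eqref{qR:b} is essentially the paper's own proof: take the $p\to 0$ limit of the three-term recurrence, use the identity \eqref{Ct} to eliminate the diagonal coefficient $\textsc{b}_{\mathrm{t}}$, and match the resulting off-diagonal coefficients against the tabulated $q$-Racah recurrence under the dictionary \eqref{qR:c}, \eqref{qR:d}; your affine relation $\textsc{x}(x)-\textsc{x}(0)=q^{S/2}(\textsc{e}-\textsc{c}_{\mathrm{t}})$, with $S=u_1+u_2+v_1+v_2$, is the paper's identity $\textsc{e}-\textsc{c}_{\mathrm{t}}=(q\mathrm{cd})^{-1/2}(\textsc{x}(x)-\mathrm{cd}q-1)$ in disguise. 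The genuine gap is in your final paragraph, the proof of \eqref{qR:a}. You assert that the limiting characteristic polynomial $p_{\mathrm{t},\textsc{m}+1}$ is ``proportional to the degree-$(\textsc{m}+1)$ $q$-Racah polynomial'' whose zeros are the grid points, ``by standard finite-support orthogonal-polynomial theory''. But there is no hypergeometric degree-$(\textsc{m}+1)$ member of the truncated family (the ${}_4\phi_3$ formula degenerates at $k=\textsc{m}+1$ since $(\mathrm{a}q;q)_{\textsc{m}+1}=0$), so what you must mean is the monic polynomial $\prod_{j=0}^{\textsc{m}}(\textsc{x}-\textsc{x}(j))$, i.e.\ the degree-$(\textsc{m}+1)$ monic orthogonal polynomial of the $q$-Racah measure --- and identifying $p_{\mathrm{t},\textsc{m}+1}$ with that polynomial is precisely the statement to be proved, not a consequence of standard theory. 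Your recurrence matching in steps 1--3 uses only the rows $k=0,\ldots,\textsc{m}-1$ (that is all the induction needs to get $f_{\mathrm{t},k}=R_k$ for $k\leq\textsc{m}$), hence it pins down the Jacobi coefficients of the limiting matrix only up to level $\textsc{m}-1$. The first $\textsc{m}+1$ orthogonal polynomials of an $(\textsc{m}+1)$-point measure do not determine the measure or its support (already for $\textsc{m}=1$: the single condition $w_1(x_1-\beta_0)+w_2(x_2-\beta_0)=0$ is satisfied by many two-point supports); the missing data are exactly the last Jacobi coefficients, which here are $b_{\mathrm{t},\textsc{m}}$ and $a_{\mathrm{t},\textsc{m}}\tilde{a}_{\mathrm{t},1}$ --- the row where the truncation $\tilde{a}_{\mathrm{t},0}=0$ sits.

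Equivalently: the $q$-Racah recurrence at $k=\textsc{m}$, where $A_{\textsc{m}}=0$, is \emph{not} a polynomial identity (its two sides have degrees $\textsc{m}$ and $\textsc{m}+1$); it holds only at the $\textsc{m}+1$ grid points, and the difference of its two sides is proportional to $p_{\mathrm{t},\textsc{m}+1}$, so asserting that this last row annihilates the vectors $\bigl(R_k(\textsc{x}(j))\bigr)_{k=0,\ldots,\textsc{m}}$ is exactly \eqref{qR:a} itself. The paper supplies the missing input by invoking the $q$-difference equation \cite[Eq.\ (14.2.6)]{koe-les-swa:hypergeometric} satisfied by the \emph{dual} $q$-Racah polynomials together with the duality symmetry \eqref{qRdual:a}, \eqref{qRdual:b}: dualizing turns the difference equation in $x$ into the recurrence in $k$, valid on the grid for all $0\leq k\leq\textsc{m}$, with $A_{\textsc{m}}=0$ coming from $1-\mathrm{a}q^{\textsc{m}+1}=1+q^{u_1+u_2+\textsc{m}}=0$. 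Your argument never invokes the difference equation or the duality, so \eqref{qR:a} is not established (your all-ones eigenvector observation does correctly settle the single case $j=0$). A minor further flaw: the monotonicity of $j\mapsto 2\cos\frac{\alpha}{2}(2j+S)$ cannot be justified by injectivity of the cosine, because \eqref{pd:a} allows $S\in(-1,0)$, in which case the $j=0$ argument is negative; monotonicity does hold, but via $\cos\frac{\alpha}{2}(2j+S)-\cos\frac{\alpha}{2}(2j'+S)=2\sin\frac{\alpha}{2}(j+j'+S)\sin\frac{\alpha}{2}(j'-j)$ and the bounds $-1<S<2(u_1+u_2)+1$ implied by \eqref{pd:a}, \eqref{pd:b}.
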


\begin{proof}
From the recurrence relation for the normalized elliptic Racah polynomials,
it follows that in the trigonometric limit:
\begin{equation}\label{t-rec}
\tilde{a}_{\mathrm{t},\textsc{m}-k}  f_{\mathrm{t},k+1}(\textsc{e})+ a_{\mathrm{t},k} f_{\mathrm{t},k-1}(\textsc{e})+
(\textsc{c}_{\mathrm{t}}-\tilde{a}_{\mathrm{t},\textsc{m}-k}-a_{\mathrm{t},k}) f_{\mathrm{t},k}(\textsc{e})= \textsc{e} f_{\mathrm{t},k}(\textsc{e})
\end{equation}
for $0\leq k <\textsc{m}$, with 
\begin{align*}
a_{\mathrm{t},k}&=\textsc{a}_{\mathrm{t}}(-u_1-k)= {\textstyle \frac{\sin\frac{\alpha}{2}(k)}{\sin\frac{\alpha}{2} (u_1+k) }
\frac{\sin \frac{\alpha}{2}(u_1-v_1-\frac{1}{2}+k )}{\sin\frac{\alpha}{2}(u_1-\frac{1}{2}+k)}}
{\textstyle \frac{\cos\frac{\alpha}{2}(u_1-u_2+k)}{\cos\frac{\alpha}{2}(u_1+k) }\frac{\cos\frac{\alpha}{2}(u_1-v_2-\frac{1}{2}+k) }{\cos\frac{\alpha}{2}(u_1-\frac{1}{2}+k)}} , \\
\tilde{a}_{\mathrm{t},k}&=\textsc{a}_{\mathrm{t}}(u_1+\textsc{m}-k)= 
 {\textstyle \frac{\sin\frac{\alpha}{2}(k)}{\sin\frac{\alpha}{2} (u_2+k) }
\frac{\sin \frac{\alpha}{2}(u_2-v_2-\frac{1}{2}+k )}{\sin\frac{\alpha}{2}(u_2-\frac{1}{2}+k)}}
{\textstyle \frac{\cos\frac{\alpha}{2}(u_2-u_1+k)}{\cos\frac{\alpha}{2}(u_2+k) }\frac{\cos\frac{\alpha}{2}(u_2-v_1-\frac{1}{2}+k) }{\cos\frac{\alpha}{2}(u_2-\frac{1}{2}+k)}} ,
\end{align*}
and  the coefficient $\textsc{b}_{\mathrm{t},k} =\textsc{b}_{\mathrm{t}}(u_1+ k)$ has been rewritten with the aid of the identity in Eq. \eqref{Ct}.
Upon comparing with the three-term recurrence relation for the $q$-Racah polynomials \cite[Eq. (14.2.3)]{koe-les-swa:hypergeometric}:
\begin{equation}\label{qR-rec}
A_k R_{k+1}(\textsc{x})+ C_kR_{k-1}(\textsc{x})+(\mathrm{c}\mathrm{d}q+1-A_k-C_k)R_k(\textsc{x})=\textsc{x}R_k(\textsc{x}),
\end{equation}
with
\begin{equation*}
{\textstyle A_k=\frac{(1-\mathrm{a}q^{k+1})(1-\mathrm{ab}q^{m+1})(1-\mathrm{bd}q^{k+1})(1-\mathrm{c}q^{k+1})}{(1-\mathrm{ab}q^{2k+1})(1-\mathrm{ab}q^{2k+2})}}
\end{equation*}
and
\begin{equation*}
{\textstyle C_k=\frac{q(1-q^k)(1-\mathrm{b}q^k)(\mathrm{c}-\mathrm{ab}q^k)(\mathrm{d}-\mathrm{a}q^k)}{(1-\mathrm{ab}q^{2k})(1-\mathrm{ab}q^{2k+1})} },
\end{equation*}
one observes that $\tilde{a}_{\mathrm{t},\textsc{m}-k} =(q\mathrm{cd})^{-1/2}A_k$, $ a_{\mathrm{t},k} =(q\mathrm{cd})^{-1/2}C_k$, and $\textsc{e}-\textsc{c}_{\mathrm{t}}=
(q\mathrm{cd})^{-1/2}(\textsc{x}(x)-\mathrm{cd}q-1)$ provided the variables and
parameters are identified in accordance with Eqs. \eqref{qR:c}, \eqref{qR:d}.  The upshot is that both recurrences coincide while $f_{\mathrm{t},0}(\textsc{e})=R_0(\textsc{x})=1$,
so Eq. \eqref{qR:b} follows.

To infer the limit in Eq. \eqref{qR:a} it suffices to check that the eigenvalues of the trigonometric degeneration of finite discrete Heun operator are indeed given by the asserted formulas on the RHS, or equivalently, that
the recurrence relation  for  $f_{\mathrm{t},k}(\textsc{e})$ in Eq. \eqref{t-rec} remains valid for $k=\textsc{m}$
if  $\textsc{e}$ belongs to $\{ \textsc{e}_{\mathrm{t},0},\ldots ,\textsc{e}_{\mathrm{t},\textsc{m}}\}$
(with $\textsc{e}_{\mathrm{t},j}$ given by the RHS of Eq.  \eqref{qR:a}). 
To this end we deduce from the $q$-difference equation \cite[Eq. (14.2.6)]{koe-les-swa:hypergeometric} satisfied by the dual $q$-Racah polynomial
$\hat{R}_j(\hat{\textsc{x}}(k))=R_j(\hat{\textsc{x}}(k);\mathrm{\hat a},\mathrm{\hat b},\mathrm{\hat c},\mathrm{\hat c} | q )$ \eqref{qRdual:a}, \eqref{qRdual:b} that
 for any $0\leq j,k\leq\textsc{m}$:
\begin{equation*}
A_k \hat{R}_{j}(\hat{\textsc{x}}(k+1))+ C_k\hat{R}_{j}(\hat{\textsc{x}}(k-1))+(\mathrm{c}\mathrm{d}q+1-A_k-C_k)\hat{R}_j(\hat{\textsc{x}}(k))=\textsc{x}(j) \hat{R}_j(\hat{\textsc{x}}(k)),
\end{equation*}
where $A_{\textsc{m}}=0$ (because $1-\mathrm{a}q^{\textsc{m}+1}=1+q^{u_1+u_2+\textsc{m}}=0$) and $C_0=0$.
In view of the duality symmetry in Eqs. \eqref{qRdual:a}, \eqref{qRdual:b}, this confirms that on shell at
$\textsc{x}=\textsc{x}(j)$ ($0\leq j\leq \textsc{m}$) the recurrence relation in Eq. \eqref{qR-rec} holds for $0\leq k\leq \textsc{m}$. After rewriting the formula in terms of $f_{\mathrm{t},k}(\textsc{e})$ with the aid of Eqs. \eqref{qR:b}--\eqref{qR:d}, we see that the same is therefore true for
the recurrence relation  in Eq. \eqref{t-rec} at  $\textsc{e}=\textsc{e}_{\mathrm{t},j}$,
$j=0,1,\ldots,\textsc{m}$.
\end{proof}

Proposition \ref{qR:prp} reveals that on shell
the trigonometric degeneration of the normalized elliptic Racah polynomial $f_k(\textsc{e}_j)$ is given by the following $q$-Racah polynomial
\begin{align}
f_{\mathrm{t},k}(\textsc{e}_{\mathrm{t},j})&= 
{\textstyle 
R_k\left(\textsc{x}(j);-q^{u_1+u_2-1},-q^{u_1-u_2},-q^{u_1+v_2-1/2},-q^{u_2+v_1-1/2} | q \right) } \\
&={}_4\phi_3 \left( \substack{q^{-k},   q^{2u_1+k}, q^{-j}, q^{u_1+u_2+v_1+v_2+j} \\ -q^{u_1+u_2},q^{u_1+v_1+1/2},-q^{ u_1+v_2+1/2}} ;q,q \right), \nonumber
\end{align}
with $\textsc{x}(j)=q^{u_1+u_2+v_1+v_2+j}+q^{-j}$. The corresponding degeneration of the orthogonality relation from Proposition \ref{or:prp} becomes
 \begin{subequations}
\begin{equation}\label{ort:a}
\sum_{k=0}^\textsc{m} f_{\mathrm{t},k}(\textsc{e}_{\mathrm{t},i}) f_{\mathrm{t},k}(\textsc{e}_{\mathrm{t},j}) \Delta_{\mathrm{t},k}
=\begin{cases}
N_{\mathrm{t},j}&\text{if}\ i=j, \\
0&\text{if}\ i\neq j,
\end{cases}
\end{equation}
for $0\leq i,j\leq\textsc{m}$, with
\begin{align}\label{ort:b}
\Delta_{\mathrm{t},k}&=
 {\textstyle  \frac{\sin \alpha (u_1+k) }{\sin (\alpha u_1)} } \times\\
 &
 \prod_{ 1\leq l \leq k}  
 {\textstyle \frac{\sin\frac{\alpha}{2}(\textsc{m}+1-l) \sin\frac{\alpha}{2}(u_2-v_2+\textsc{m}+\frac{1}{2}-l)}{\sin (\frac{\alpha}{2}l) \sin\frac{\alpha}{2}(u_1-v_1-\frac{1}{2}+l)}}
 {\textstyle \frac{\cos\frac{\alpha}{2}(u_2-u_1+\textsc{m}+1-l) \cos\frac{\alpha}{2}(u_2-v_1+\textsc{m}+\frac{1}{2}-l)}{\cos\frac{\alpha}{2}(u_1-u_2+l) \cos\frac{\alpha}{2}(u_1-v_2-\frac{1}{2}+l)}} \nonumber
\end{align}
and
\begin{align}\label{ort:c}
N_{\mathrm{t},j}=&\frac{1}{\epsilon_{\mathrm{t},j} \, a_{\mathrm{t},1}\cdots a_{\mathrm{t},\textsc{m}}}
\prod_{\substack{0\leq l\leq \textsc{m} \\ l\neq j}} ( \textsc{e}_{\mathrm{t},j}-\textsc{e}_{\mathrm{t},l} ) \\
=& \frac{1}{\epsilon_{\mathrm{t},j}} 
\prod_{ 1\leq k\leq \textsc{m}}{\textstyle 
\frac{\sin\frac{\alpha}{2} (u_1+k)}{\sin\frac{\alpha}{2} k } \frac{\sin\frac{\alpha}{2}(u_1-\frac{1}{2}+k)}{\sin\frac{\alpha}{2}(u_1-v_1-\frac{1}{2}+k )}}
{\textstyle \frac{\cos\frac{\alpha}{2}(u_1+k)}{\cos\frac{\alpha}{2}(u_1-u_2+k)} 
\frac{\cos\frac{\alpha}{2}(u_1-\frac{1}{2}+k)}{\cos\frac{\alpha}{2}(u_1-v_2-\frac{1}{2}+k )}} \nonumber \\
&\times
\prod_{\substack{0\leq l\leq \textsc{m} \\ l\neq j}} \bigl( {\textstyle 2\cos \frac{\alpha}{2} (2j+u_1+u_2+v_1+v_2)}-{\textstyle 2\cos \frac{\alpha}{2} (2l+u_1+u_2+v_1+v_2)} \bigr) , \nonumber 
\end{align}
where
\begin{equation*}
 \epsilon_{\mathrm{t},j}^{-1}=\frac{p_{\mathrm{t},\textsc{m}}(\textsc{e}_{\mathrm{t},j})}{\tilde{a}_{\mathrm{t},1}\cdots\tilde{a}_{\mathrm{t},\textsc{m}}}=   
 f_{\mathrm{t},\textsc{m}}(\textsc{e}_{\mathrm{t},j}) = 
{}_4\phi_3 \left( \substack{q^{-\textsc{m}},   q^{2u_1+\textsc{m}}, q^{-j}, q^{u_1+u_2+v_1+v_2+j} \\ -q^{u_1+u_2},q^{u_1+v_1+1/2},-q^{ u_1+v_2+1/2}} ;q,q \right) .
\end{equation*}
By means of the relation
$q^{u_1+u_2+\textsc{m}}=-1$ we reduce the latter ${}_4\phi_3$ series to a ${}_3\phi_2$ series that can be evaluated via 
Jackson's $q$-Pfaff-Saalsch\"utz sum \cite[Eq. (17.7.4)]{olv-loz-boi-cla:nist}:
\begin{align}
\epsilon_{\mathrm{t},j}^{-1}&=   {}_3\phi_2 \left( \substack{  - q^{u_1-u_2}, q^{-j}, q^{u_1+u_2+v_1+v_2+j} \\ q^{u_1+v_1+1/2},-q^{ u_1+v_2+1/2}} ;q,q \right)= 
{\textstyle\frac{(-q^{u_2+v_1+1/2}, q^{-u_2-v_2+1/2-j};q)_j}{(q^{u_1+v_1+1/2},-q^{-u_1-v_2+1/2-j};q)_j} }  \nonumber \\
&=(-1)^j \prod_{0\leq l < j} 
{\textstyle \frac{\sin\frac{\alpha}{2} (u_2+v_2 +1/2+l)}{\sin\frac{\alpha}{2} (u_1+v_1 +1/2+l)}
\frac{\cos\frac{\alpha}{2} (u_2+v_1 +1/2+l)}{\cos\frac{\alpha}{2} (u_1+v_2 +1/2+l)} } \label{ort:d}
\end{align}
\end{subequations}
(where $(z;q)_j=\prod_{0\leq l <j} (1-zq^l)$ and $(z_1,\ldots ,z_s;q)_j=(z_1;q)_j\cdots (z_s;q)_j$).

It is instructive to compare the orthogonality in Eqs. \eqref{ort:a}--\eqref{ort:d} with the (dual) orthogonality relations for the $q$-Racah polynomials subject to the truncation condition $\mathrm{a}q^{\textsc{m}+1}=1$ (cf. \cite[Eq. (14.2.2)]{koe-les-swa:hypergeometric}):
\begin{subequations}
\begin{align}\label{qRo:a}
\sum_{k=0}^{\textsc{m}}
R_k(\textsc{x}(i);\mathrm{a},\mathrm{b},\mathrm{c},\mathrm{d} | q )
R_k(\textsc{x}(j);\mathrm{a},\mathrm{b},\mathrm{c},\mathrm{d} | q )
\, \Delta_k(\mathrm{a},\mathrm{b},\mathrm{c},\mathrm{d};q)& \\ 
=\begin{cases}
N_0/\Delta_j(\mathrm{c},\mathrm{d},\mathrm{a},\mathrm{b};q) &\text{if}\ i=j\\
0&\text{if}\ i\neq j
\end{cases} 
& \nonumber
\end{align}
with
\begin{equation}\label{qRo:b}
   \Delta_k(\mathrm{a},\mathrm{b},\mathrm{c},\mathrm{d};q)= 
\frac{(\mathrm{c} q,\mathrm{bd} q,\mathrm{a} q,\mathrm{ab} q;q)_k}
     {(q,\mathrm{c}^{-1}\mathrm{ab} q,\mathrm{d}^{-1}\mathrm{a} q,\mathrm{b} q;q)_k}
     \frac{(1-\mathrm{ab} q^{2k+1})}{(\mathrm{cd} q)^k(1-\mathrm{ab} q)}
\end{equation}
and
\begin{equation}\label{qRo:c}
    N_0= \sum_{k=0}^{\textsc{m}} \Delta_k(\mathrm{a},\mathrm{b},\mathrm{c},\mathrm{d};q)= 
    \sum_{j=0}^{\textsc{m}} \Delta_j(\mathrm{c},\mathrm{d},\mathrm{a},\mathrm{b};q)=
    \frac{(\mathrm{b}^{-1},\mathrm{cd}q^2)_{\textsc{m}}}{(\mathrm{b}^{-1}\mathrm{c}q,\mathrm{d}q;q)_{\textsc{m}}}.
\end{equation}
\end{subequations}
Indeed, the orthogonality weights in Eq. \eqref{ort:b} and Eq. \eqref{qRo:b} coincide for parameters in accordance
with Eq. \eqref{qR:c}, \eqref{qR:d} (subject to the truncation condition \eqref{pd:b}): 
\begin{equation*}
\Delta_{\mathrm{t},k}=\Delta_k(-q^{u_1+u_2-1}, -q^{u_1-u_2}  ,-q^{u_1+v_2-1/2},-q^{u_2+v_1-1/2} ;q)
\end{equation*}
when $ q=e^{\text{i}\alpha}$ with $\alpha=\frac{\pi}{u_1+u_2+\textsc{m}}$. This implies that the quadratic
norms  $N_{\mathrm{t},j}$ \eqref{ort:c}, \eqref{ort:d} can be rewritten in the form:
\begin{subequations}
\begin{equation}
  N_{\mathrm{t},j}=N_{\mathrm{t},0}/\hat{\Delta}_{\mathrm{t},j }
\end{equation}
with
\begin{align}
\hat{\Delta}_{\mathrm{t},j} &= \Delta_j (-q^{u_1+v_2-1/2},-q^{u_2+v_1-1/2},-q^{u_1+u_2-1}, -q^{u_1-u_2}  ;q) \\
&= 
 {\textstyle  \frac{\sin \frac{\alpha}{2} (u_1+u_2+v_1+v_2+2j) }{\sin \frac{\alpha}{2} (u_1+u_2+v_1+v_2)} } \times \nonumber \\
 &
 \prod_{ 1\leq l \leq j}  
 {\textstyle \frac{\sin\frac{\alpha}{2}(\textsc{m}+1-l) \sin\frac{\alpha}{2}(u_2-v_2+\textsc{m}+\frac{1}{2}-l)}
 {\sin (\frac{\alpha}{2}l) \sin\frac{\alpha}{2}(u_2+v_2-\frac{1}{2}+l)}}
 {\textstyle \frac{\cos\frac{\alpha}{2}(-v_1-v_2+\textsc{m}+1-l) \cos\frac{\alpha}{2}(u_2-v_1+\textsc{m}+\frac{1}{2}-l)}{\cos\frac{\alpha}{2}(v_1+v_2+l) \cos\frac{\alpha}{2}(u_2+v_1-\frac{1}{2}+l)}} \nonumber
\end{align}
and
\begin{equation}
N_{\mathrm{t},0}=\sum_{k=0}^{\textsc{m}} \Delta_{\mathrm{t},k}= \sum_{j=0}^{\textsc{m}} \hat{\Delta}_{\mathrm{t},j} =
\prod_{1\leq l\leq\textsc{m}}  {\textstyle 
\frac{\sin\frac{\alpha}{2} (2u_1+l)\ \sin\frac{\alpha}{2} (u_1+u_2+v_1+v_2+l)  }{\sin\frac{\alpha}{2} (u_1-v_1-\frac{1}{2}+l)\sin\frac{\alpha}{2} (u_2+v_2-\frac{1}{2}+l)  } }.
\end{equation}
\end{subequations}

We thus conclude that the inverse  of the matrix
\begin{equation*}
  \mathbf{F}_{\mathrm{t}}=
 \begin{bmatrix}
f_{\mathrm{t}, 0}(\textsc{e}_{\mathrm{t}, 0}) & f_{\mathrm{t}, 0}(\textsc{e}_{\mathrm{t}, 1})&  \cdots & f_{\mathrm{t}, 0}(\textsc{e}_{\mathrm{t}, \textsc{m}}) \\
f_{\mathrm{t}, 1}(\textsc{e}_{\mathrm{t}, 0}) & f_{\mathrm{t}, 1}(\textsc{e}_{\mathrm{t}, 1})&  \cdots & f_{\mathrm{t}, 1}(\textsc{e}_{\mathrm{t}, \textsc{m}}) \\
\vdots &  \vdots &  \vdots  & \vdots \\
f_{\mathrm{t}, \textsc{m}-1}(\textsc{e}_{\mathrm{t}, 0}) & f_{\mathrm{t}, \textsc{m}-1}(\textsc{e}_{\mathrm{t}, 1})&  \cdots & f_{\mathrm{t}, \textsc{m}-1}(\textsc{e}_{\mathrm{t}, \textsc{m}}) \\
f_{\mathrm{t}, \textsc{m}}(\textsc{e}_{\mathrm{t}, 0}) & f_{\mathrm{t}, \textsc{m}}(\textsc{e}_{\mathrm{t}, 1})&  \cdots & f_{\mathrm{t}, \textsc{m}}(\textsc{e}_{\mathrm{t}, \textsc{m}}) 
\end{bmatrix} 
\end{equation*}
is given by (cf. Corollary \ref{Finv:cor})
\begin{subequations}
\begin{equation}
\mathbf{F}_{\mathrm{t}}^{-1}= N_{\mathrm{t},0}^{-1} \boldsymbol{\hat\Delta}_\mathrm{t}  \mathbf{F}_{\mathrm{t}}^T  \boldsymbol{\Delta}_\mathrm{t}  
\end{equation}
with
\begin{equation}
\boldsymbol{\Delta}_\mathrm{t}=\text{diag}\bigl( \Delta_{\mathrm{t},0},  \Delta_{\mathrm{t},1}  ,\ldots , \Delta_{\mathrm{t},\textsc{m}}         \bigr) , \quad
\boldsymbol{\hat\Delta}_\mathrm{t}=\text{diag}\bigl( \hat{\Delta}_{\mathrm{t},0},  \hat{\Delta}_{\mathrm{t},1}  ,\ldots , \hat{\Delta}_{\mathrm{t},\textsc{m}}         \bigr) ,
\end{equation}
while its determinant is given by
\begin{equation}
\det (\mathbf{F}_{\mathrm{t}}) = 
\frac{(-1)^{\frac{1}{2}\textsc{m}(\textsc{m}+1) } N_{\mathrm{t},0}^{\frac{1}{2}(\textsc{m}+1)} }{\sqrt{\prod_{0\leq l\leq\textsc{m}} \Delta_{\mathrm{t},l} \hat{\Delta}_{\mathrm{t},l}} }       .
\end{equation}
\end{subequations}

\section*{Acknowledgements}
The work of JFvD was supported in part by the {\em Fondo Nacional de Desarrollo
Cient\'{\i}fico y Tecnol\'ogico (FONDECYT)} Grant \# 1210015. TG was supported in part by the NKFIH Grant K134946.

\bigskip\noindent
\parbox{.135\textwidth}{\begin{tikzpicture}[scale=.03]
\fill[fill={rgb,255:red,0;green,51;blue,153}] (-27,-18) rectangle (27,18);  
\pgfmathsetmacro\inr{tan(36)/cos(18)}
\foreach \i in {0,1,...,11} {
\begin{scope}[shift={(30*\i:12)}]
\fill[fill={rgb,255:red,255;green,204;blue,0}] (90:2)
\foreach \x in {0,1,...,4} { -- (90+72*\x:2) -- (126+72*\x:\inr) };
\end{scope}}
\end{tikzpicture}} \parbox{.85\textwidth}{This project has received funding from the European Union's Horizon 2020 research and innovation programme under the Marie Sk{\l}odowska-Curie grant agreement No 795471.}

\bibliographystyle{amsplain}

\end{document}